\providecommand{\email}[1]{\url{#1}} 
\newenvironment{equationa*}{\begin{equation*}\begin{aligned}} {\end{aligned}\end{equation*}}
\begin{document}

\title{High-Order Quadrature for Implicitly Defined Surfaces and Regions}

\author{
Zibo Zhao \thanks{Qiu Zhen College, Tsinghua University, Beijing, China (\texttt{zhaozb21@mails.tsinghua.edu.cn}).}
\and
Zuoqiang Shi \thanks{Yau Mathematical Sciences Center, Tsinghua University,
Beijing, China, 100084. \&
Yanqi Lake Beijing Institute of Mathematical Sciences and Applications,
 Beijing, China, 101408.
\email{zqshi@tsinghua.edu.cn}}}

\maketitle

\begin{abstract}
We propose a high-order quadrature method for integrating over curves, surfaces, and enclosed regions defined implicitly as zero level sets of smooth functions. The method combines a mesh adjustment procedure with local change-of-variables parametrizations on cut elements. The mesh adjustment moves background mesh vertices away from the interface, ensuring a simple and nondegenerate intersection pattern between the level set and the simplicial mesh. On each cut element, the interface is parametrized by solving one-dimensional nonlinear equations along prescribed rays, reducing surface and region integration to standard Gauss--Legendre quadrature on fixed reference domains. The resulting curve and surface quadrature rules are local, nonrecursive, and have strictly positive weights. We prove high-order accuracy for curve, surface, and region integrals in two and three dimensions under natural smoothness and mesh consistency assumptions. Numerical experiments on representative implicit geometries confirm the predicted convergence rates.
\end{abstract}

\begin{keywords}
high-order quadrature, implicit geometry, mesh adjustment, local parametrization.  
\end{keywords}
\section{Introduction}
We develop a high-order quadrature method for integrals over hypersurfaces given as the zero level set of a smooth function \(F: \mathbb{R}^d \rightarrow \mathbb{R}\). Let \(\Gamma=\{x:F(x)=0\}\). Throughout the paper, \(\Omega\) denotes the bounded region enclosed by \(\Gamma\), and the sign of \(F\) is chosen so that \(\Omega=\{x:F(x)\le 0\}\). Equivalently, one may work with \(\Omega=\{F\le0\}\cap U\) in a fixed computational box \(U\), provided that every boundary component created by \(\partial U\) is treated separately. We assume that \(\overline\Omega\) is compactly contained in the computational box used by the background mesh. We consider
\[
\int_{\Gamma} f\,dS,\qquad
\int_{\Omega} f\,dx,
\]
We assume that \(\Gamma\) is a compact regular level set, so that
\(\nabla F\neq 0\) on \(\Gamma\). For a fixed \(N_q\)-point quadrature rule, we
assume that \(F\) and \(f\) have sufficiently many bounded derivatives in a
neighborhood of the integration domain; for example,
\(F\in C^{2N_q+2}\) and \(f\in C^{2N_q}\) are sufficient for the estimates
used below. Constants in the analysis may depend on these smoothness bounds,
the geometry of \(\Gamma\), the mesh shape-regularity constant, and the
uniform transversality constant, but not on the mesh size \(h\).

Implicitly defined surfaces and domains arise naturally in level set methods, interface tracking, surface finite element methods, unfitted finite element methods, and finite-cell or cut finite element discretizations \cite{Osher1988,Dziuk2013,Burman2015,abedian2013performance}. In these settings, accurate integration over surfaces, cut cells, or implicitly bounded regions is essential. Since the geometry is given only through a level set function, the main difficulty is to construct quadrature rules that capture the curved geometry without reducing the accuracy of the underlying high-order discretization.

Several classes of methods have been developed. One approach explicitly resolves the geometry before applying standard quadrature. Min and Gibou \cite{min2007geometric} decompose irregular domains into simplices and reconstruct the interface by piecewise linear interpolation. Related surface extraction techniques include marching cubes and marching tetrahedra \cite{Lorensen1987,Payne1990,Gueziec1995}. Higher-order reconstruction-based schemes, such as the method of Fries and Omerovi\'c \cite{Thomas2020}, improve geometric accuracy for implicit domains. These methods are intuitive and compatible with standard finite element assembly, but their accuracy depends on the reconstructed geometry, and small cuts, topology changes, and high-order curved interfaces can be delicate.

A second approach uses Dirac delta or Heaviside functions to rewrite surface and region integrals on a fixed background grid. Since these generalized functions are singular or discontinuous, regularization is required. Engquist, Tornberg, and Tsai \cite{engquist2005discretization} studied delta-function discretizations in level set methods, and related robust approximations were developed in \cite{Min2008,smereka2006numerical,towers2007two,zahedi2010delta}. Wen derived high-order discretizations for delta-function integrals in one, two, and three dimensions \cite{Wen2008,Wen2009,Wen2010}. Although such methods avoid explicit reconstruction, their accuracy depends on the regularization width, local resolution, and the quality of the level set representation.

Other methods rely on integral identities or algebraic construction of quadrature rules. The coarea formula \cite{Federer} has been used to approximate integrals over implicitly defined hypersurfaces \cite{drescher2017}. Moment-fitting methods construct weights by enforcing exactness conditions \cite{muller2013highly}. These methods can be highly accurate and efficient, but the resulting weights may be non-positive, which can be undesirable for stability or positivity preservation.

Saye \cite{saye2015} proposed a recursive high-order quadrature method on
hyperrectangles.  A coordinate direction is selected in which a component of
\(\nabla F\) remains nonzero, the interface is represented as the graph of a
height function, and the resulting base integral is treated recursively in one
fewer spatial dimension.  This construction is general and highly accurate,
but it may require subdivision when no single coordinate direction is valid on
the whole cell.  For example, on
\(\Gamma=\{x^2+y^2=1\}\), neither \(\partial_xF\) nor
\(\partial_yF\) remains uniformly nonzero on a box containing the entire
circle.

Saye later extended this dimension-reduction framework to domains described by
one or more multivariate polynomials \cite{saye2022}.  The resulting method
handles multi-component geometry, Boolean combinations, junctions, and
singular algebraic sets by constructing implicitly defined multivalued height
functions.  It also admits an extension from hyperrectangles to simplex
constraint cells by introducing additional polynomial inequalities.  For
smooth problems, the reported \(h\)-convergence rate is \(2q\) when the
underlying one-dimensional Gauss--Legendre rule has \(q\) nodes, and
\(q\)-refinement is approximately exponential.  The price for this generality
is a recursive geometric decomposition based on projected critical sets and
lower-dimensional implicit domains.

Cui, Leng, Liu, Zhang, and Zheng \cite{cui2020} developed a direct
high-order algorithm for a tetrahedron cut by an implicitly defined curved
interface.  Their construction decomposes the volume and surface integrals
into a sequence of one-dimensional integrals after selecting suitable
integration directions and locating the nonessential singularities of the
reduced integrands.  The method requires only univariate root finding, produces
positive weights with quadrature points in the integration domain, and has
been implemented in the PHG finite element toolbox.  Thus, one-dimensional
root finding and positivity by themselves are not the distinguishing features
of the present work.  Related graph- and direction-based constructions also
appear in \cite{beck2023,olshanskii2016numerical}.

The contribution of the present paper is complementary.  First, we give a
complete vertex-displacement algorithm that converts a shape-regular
background simplicial mesh into a mesh having a uniformly simple and
nondegenerate intersection with the level set.  The induced curve and surface
patches therefore form a geometrically well-conditioned local mesh on
\(\Gamma\), without reconstructing \(\Gamma\) by polynomial surface elements.
The proof shows that the adjusted background mesh remains shape-regular and
that every cut simplex satisfies a uniform separation and transversality
condition.  Second, this geometry permits a nonrecursive radial
parametrization on each local patch, for which we prove uniform high-order
regularity estimates and rigorous global quadrature-error bounds for curve,
surface, and enclosed-region integrals.  These two ingredients---a proved
mesh-generation procedure and a complete error analysis for the resulting
radial charts---are the main innovations of the paper.

\subsection{Overview of the proposed method.}
The proposed method consists of three steps. First, the computational box is partitioned into a simplicial background mesh. The mesh vertices near the zero level set are then slightly displaced away from the interface. This mesh adjustment does not approximate the surface; rather, it guarantees that each cut element has a simple and nondegenerate intersection pattern with $\Gamma$. In particular, no vertex of a cut element lies too close to the interface, and the radial directions used later are uniformly transverse to $\Gamma$.


Second, on each local chart, the interface is parametrized by solving a one-dimensional nonlinear equation along prescribed rays. For example, after translating the apex vertex to the origin, points on the interface are written as
\begin{equation}
z(\lambda)=\alpha(\lambda)x(\lambda),
\qquad
F(\alpha(\lambda)x(\lambda))=0.
\end{equation}
Region integrals are treated by adding one radial variable:
\begin{equation}
\phi(\lambda,\beta)=\beta z(\lambda),
\qquad
0\le \beta\le 1.
\end{equation}
The details are given in Section \ref{sec:2dcurve},\ref{sec:2dregion}, section \ref{sec:3dplane} and section \ref{sec:3dregion}. 

Finally, the transformed integrals are evaluated by standard Gauss--Legendre
quadrature rules on fixed reference intervals or triangles. For curve and surface
integrals, the reference weights and the absolute Jacobian factors are
positive. Region integrals are evaluated by standard Gauss--Legendre
quadrature rules on fixed reference triangles or tetrahedrons. The regularity estimates
in section \ref{sec:order} show that the transformed integrands have the required smoothness
and scaling properties, which leads to the global error estimate of order
$O(h^{N_q})$ for curve, surface, and region integrals.

The paper is organized as follows: Basic geometrical analysis is present in Section 2. Sections 3 and Section 4 gives the algorithm for 2D and 3D case respectively. We analyze the regularity of the parametrization and associated error analysis in Section 5. Section \ref{sec:surfacetest} gives numerical experiments. Section \ref{sec:conclusion} concludes the paper.

\section{Intersection between Mesh and Surface}
\label{sec:proof}

Let \(\Gamma = \{x \in \mathbb{R}^3: F(x) = 0\}\) be a smooth compact surface, with \(\kappa_1(x), \kappa_2(x)\) denoting the principal curvatures at \(x \in \Gamma\). Define:
\[
K_\Gamma
=
\max_{x\in\Gamma}\max\{|\kappa_1(x)|,|\kappa_2(x)|\}.
\]
Then we introduce the definition of reach for smooth manifold.
\begin{definition}
Let \(M \subset \mathbb{R}^3\) be a smooth submanifold. The
\textbf{reach} of \(M\), denoted by \(r_M\), is the supremum of all
\(r>0\) such that every point \(x\) with
\(\operatorname{dist}(x,M)<r\) has a unique closest point
\(\pi_M(x)\in M\).  Thus
\[
\operatorname{dist}(x,\pi_M(x))=\operatorname{dist}(x,M)
\qquad\text{whenever }\operatorname{dist}(x,M)<r_M.
\]
\end{definition}

It is well known that the reach is strictly positive for compact smooth manifold.  
\begin{proposition} \cite{Federer}
For a compact smooth submanifold \(M \subset \mathbb{R}^3\), the reach \(r_M > 0\).
\end{proposition}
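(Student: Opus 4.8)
The plan is to identify the reach with the width of a tubular neighborhood and then produce such a neighborhood of uniform width by combining the inverse function theorem with the compactness of \(M\). Let \(NM = \{(p,v) : p \in M,\ v \in N_pM\}\) denote the normal bundle of \(M\) and consider the endpoint map \(E(p,v) = p + v\), sending \(NM\) into \(\mathbb{R}^3\). The key observation is that if \(x \in \mathbb{R}^3\) has a nearest point \(y \in M\), then the first-order optimality condition forces \(x - y\) to be orthogonal to \(T_yM\), so \(x = E(y,\, x-y)\) with \(|x-y| = d(x,M)\). Consequently, if \(E\) is injective on the open disk bundle \(\{(p,v) : |v| < r_0\}\), then every \(x\) with \(d(x,M) < r_0\) has a \emph{unique} nearest point, which shows \(r_M \ge r_0\). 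The task therefore reduces to producing a uniform \(r_0 > 0\) on which \(E\) is injective.

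First I would establish that \(E\) is a local diffeomorphism along the zero section. At a point \((p,0)\) the tangent space of \(NM\) splits as \(T_pM \oplus N_pM\), and a direct computation gives \(dE_{(p,0)}(w,v) = w + v\); since \(T_pM\) and \(N_pM\) are orthogonal complements in \(\mathbb{R}^3\) (using smoothness and \(\dim M + \operatorname{codim} M = 3\)), this sum map is an isomorphism \(T_pM \oplus N_pM \xrightarrow{\sim} \mathbb{R}^3\). By the inverse function theorem, each \((p,0)\) has a neighborhood on which \(E\) restricts to a diffeomorphism, yielding \emph{local} injectivity.

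The main obstacle is upgrading this local injectivity to injectivity on a tube of uniform radius, and this is exactly where compactness enters, most cleanly through a contradiction argument. Suppose no uniform \(r_0\) worked, so that \(r_M = 0\). Then for each \(n\) there is a point \(x_n\) with \(d(x_n, M) < 1/n\) admitting two distinct nearest points \(y_n^1 \ne y_n^2\). Since \(|x_n - y_n^i| = d(x_n,M) \to 0\), all three sequences share a common limit; by compactness of \(M\), after passing to a subsequence we may assume \(y_n^1 \to y\) and \(y_n^2 \to y\) for some \(y \in M\). For \(n\) large, the pairs \((y_n^1,\, x_n - y_n^1)\) and \((y_n^2,\, x_n - y_n^2)\) both lie in the neighborhood of \((y,0)\) on which \(E\) is injective, and both are mapped by \(E\) to \(x_n\); injectivity then forces \(y_n^1 = y_n^2\), contradicting their distinctness.

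To make this step fully airtight I would cover the compact zero section \(M \times \{0\}\) by finitely many of the neighborhoods furnished by the inverse function theorem, extract from them a single Lebesgue-type radius \(r_0\) below which any two preimages of a common point must coincide, and check that the optimality characterization \(x = y + v\), \(v \perp T_yM\), \(|v| = d(x,M)\), always places the relevant preimages inside this uniform tube. This finite-cover bookkeeping, rather than any hard estimate, is the only delicate part; the entire analytic content is carried by the inverse function theorem, and compactness merely converts its local conclusion into a uniform one, giving \(r_M \ge r_0 > 0\).
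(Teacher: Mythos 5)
Your proof is correct, but note that the paper does not actually prove this proposition --- it is stated with a citation to Federer --- so there is no internal argument to compare against; what you have written is a self-contained version of the classical tubular-neighbourhood proof. Your structure is sound: the first-order optimality condition places every nearest-point pair \((y, x-y)\) in the normal bundle, the endpoint map \(E(p,v)=p+v\) has invertible differential along the zero section because \(T_pM\oplus N_pM=\mathbb{R}^3\), and the compactness-plus-contradiction argument (two distinct nearest points \(y_n^1\neq y_n^2\) of \(x_n\) with \(d(x_n,M)\to 0\) must eventually land in a single injectivity neighbourhood of some \((y,0)\)) correctly upgrades local injectivity to a uniform positive reach. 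Two small remarks. First, your final paragraph about extracting a Lebesgue-type radius from a finite subcover is unnecessary and is in fact the more delicate route (two preimages of a common point need not a priori lie in the same chart of the cover); the contradiction argument already closes the proof, so you should simply delete the last paragraph or present it as an alternative. Second, you silently use that a nearest point exists (compactness of \(M\)) and that \(M\) is closed, i.e.\ without boundary, so that the first-order condition is the full orthogonality \(x-y\perp T_yM\) rather than a one-sided inequality; both hypotheses hold for the compact level sets the paper works with, but they are worth stating since the argument genuinely fails for manifolds with boundary. As a bonus, your argument works verbatim for any compact embedded submanifold of \(\mathbb{R}^d\), not just surfaces in \(\mathbb{R}^3\).
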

Then we introduce an important geometrical result which will be used a lot through the paper. 
\begin{lemma}
\label{lem:h2}
 For any \(\bm{p,q} \in \mathbb{R}^3\) with \(h = \|\bm{p-q}\| < r_\Gamma\) and \(h K_\Gamma \leq 1/4\), if the line segment \(\overline{\bm{pq}}\) intersects \(\Gamma\) more than once, then:
\[
\max\{\mathrm{dist}(\bm{p}, \Gamma), \mathrm{dist}(\bm{q}, \Gamma)\} \leq K_\Gamma h^2.
\]
Here, intersections are counted with multiplicity. A tangential contact is counted as two intersections.
\end{lemma}

\begin{proof}
Let \(d(\cdot, \Gamma)\) be the signed distance function for \(\Gamma\). Set
\[
\bm v=\frac{\bm q-\bm p}{\|\bm q-\bm p\|},
\qquad
\bm x(t)=\bm p+t\bm v,
\qquad
0\le t\le h,
\]
and let \(d(t)=d(\bm x(t),\Gamma)\). We first show \(d(t)\in C^1[0,h]\).

Using the implicit function theorem, let \(\bm{y} = \pi_\Gamma(\bm x)\) be the closest point projection of \(\bm{x}\). The implicit relation \(G(t, \bm{y}, \xi) = 0\) is defined as:
\[
G(t, \bm{y}, \xi) = \begin{pmatrix}
F(\bm{y}) \\
\bm{x}(t) - \bm{y} - \xi \nabla F(\bm{y})
\end{pmatrix}.
\]
The Jacobian \(\frac{\partial G}{\partial (\bm{y}, \xi)}\) is invertible under the smoothness of \(F\) and small \(\xi\), ensuring \(d(t)\) is smooth.

Let \(\bm{n}(\bm{y})\) denote the outer normal of \(\Gamma\) at \(\bm{y}\). The projection satisfies:
\[
\bm{x}(t) = \pi(\bm{x}(t)) + d(t) \bm{n}(\pi(\bm{x}(t))).
\]
Differentiating with respect to \(t\) yields:
\[
\bm{v} = \frac{d\pi}{dt} + d'(t) \bm{n\circ \pi\circ x}(t) + d(t) \frac{d}{dt}\bm{n\circ \pi\circ x}(t).
\]
Since \(\frac{d\pi}{dt} \cdot \bm{n} = 0\) and \(\frac{d\bm{n}}{dt} \cdot \bm{n} = 0\), we have:
\[
d'(t) = \bm{v} \cdot \bm{n}(\pi(\bm{x}(t))), \quad \bm{v}_{\text{tan}} = (I - d(t)S) \frac{d\pi}{dt},
\]
where \(\bm{v}_{\text{tan}} = \bm{v} - (\bm{v} \cdot \bm{n}) \bm{n}\), and \(S\) is the Weingarten map with principal curvatures \(\kappa_1, \kappa_2\) and eigenvectors \(\bm{e}_1, \bm{e}_2\).

We calculate the second order derivative further.
\[
d''(t) = -\bm{v} \cdot S (I - d(t) S)^{-1} \bm{v}_{\text{tan}} = -\sum_{i=1}^2 \frac{\kappa_i v_i^2}{1 - d(t) \kappa_i},
\]
where \(v_i = \bm{v} \cdot \bm{e}_i\). Since \(|d(t)| \leq h\) and
\(|d(t)\kappa_i|\le hK_\Gamma\le 1/4\), we have
\[
|d''(t)|
\leq
\frac{4}{3}\sum_{i=1}^2|\kappa_i|v_i^2
\leq
\frac{4}{3}K_\Gamma
\leq
2K_\Gamma.
\]

If the two intersections occur at distinct parameters
\(t_1,t_2\in[0,h]\), then \(d(t_1)=d(t_2)=0\), and the standard
two-point interpolation remainder gives
\[
|d(t)| \leq \frac{1}{2} \left(\max_{t \in [0, h]} |d''(t)|\right) |(t - t_1)(t - t_2)|,
\]
If the segment has a tangential contact at \(t_1=t_2\), then
\(d(t_1)=d'(t_1)=0\); Taylor's theorem, or equivalently the Hermite
interpolation remainder, gives the same estimate with
\((t-t_1)^2\).  Consequently, in both cases
\[
\max\{\mathrm{dist}(\bm p, \Gamma), \mathrm{dist}(\bm q, \Gamma)\} = \max\{|d(0)|, |d(h)|\} \leq K_\Gamma h^2.
\]
\end{proof}

The intersection between tetrahedral mesh and surface can be very much simplified on the basis of Lemma \ref{lem:h2}. 

Let $T=\overline{\bm{o}_0\bm{o}_1\bm{o}_2\bm{o}_3}$ be a tetrahedral in $\mathbb{R}^3$ and $\bm{o}_i,\; i=0,1,2,3$ are vertices of $T$.  
\[
h_T = \max_{\bm{x},\bm{y}\in T} \|\bm{x}-\bm{y}\|=\max_{i,j\in \{0,1,2,3\}} \|\bm{o}_i-\bm{o}_j\|.
\]

\begin{definition}\label{def:consistent}
\(T=\overline{\bm{o}_0\bm{o}_1\bm{o}_2\bm{o}_3}\) is \textbf{consistent} with \(\Gamma\) as $h_T\rightarrow 0$  if:
\begin{itemize}
    \item \(h_T < r_\Gamma\), where \(r_\Gamma\) is the reach of \(\Gamma\).
     \item \(h_T K_\Gamma\le 1/4\)
    \item $\exists c_0>0$ independent on $h_T$, \(\text{\rm dist}(\bm{o}_i, \Gamma) > \max\{ c_0h_T, K_\Gamma h_T^2\},\; i=0,1,2,3\).
\end{itemize}
\end{definition}

The intersection between $T$ and surface $\Gamma$ can be summarized as following theorem. 
\begin{theorem}
\(\Gamma = \{x \in \mathbb{R}^3: F(x) = 0\}\) is a smooth compact surface.  $T=\overline{\bm{o}_0\bm{o}_1\bm{o}_2\bm{o}_3}$ is a tetrahedron consistent with \(\Gamma\).
\begin{itemize}
    \item[] Case 1: $$\text{\rm sgn}(F(\bm{o}_0))=\text{\rm sgn}(F(\bm{o}_1))=\text{\rm sgn}(F(\bm{o}_2))=\text{\rm sgn}(F(\bm{o}_3)).$$ 
    In this case $$\Gamma \cap T = \emptyset.$$
    
    \item[] Case 2: As shown in Fig. \ref{fig:cut-3D} 
    $$-\text{\rm sgn}(F(\bm{o}_0))=\text{\rm sgn}(F(\bm{o}_1))=\text{\rm sgn}(F(\bm{o}_2))=\text{\rm sgn}(F(\bm{o}_3))$$ 
    In this case, define the map 
    \begin{align*}
        \varphi:\quad &T\cap \Gamma \longrightarrow\;  \triangle \bm{o}_1\bm{o}_2\bm{o}_3\\
    & \bm{p}\;\longmapsto \;\overline{\bm{o}_0 \bm{p}}\cap \triangle \bm{o}_1\bm{o}_2\bm{o}_3. 
    \end{align*}
    $\varphi$ is bijective.
    
    \item[] Case 3: As shown in Fig. \ref{fig:cut-3D} 
    $$-\text{\rm sgn}(F(\bm{o}_0))=-\text{\rm sgn}(F(\bm{o}_1))=\text{\rm sgn}(F(\bm{o}_2))=\text{\rm sgn}(F(\bm{o}_3))$$  
    In this case, denote
    \[
    \bm p=\overline{\bm o_1\bm o_2}\cap\Gamma,\qquad
    T_1=\overline{\bm o_0\bm o_1\bm o_3\bm p},\qquad
    T_2=\overline{\bm o_0\bm o_2\bm o_3\bm p}.
    \]
    Define the two local projection maps
    \begin{align*}
        \varphi_1:\quad&T_1\cap\Gamma
        \longrightarrow \triangle\bm o_0\bm o_1\bm p,
        &
        \bm q&\longmapsto
        \overline{\bm o_3\bm q}\cap
        \triangle\bm o_0\bm o_1\bm p,\\
        \varphi_2:\quad&T_2\cap\Gamma
        \longrightarrow \triangle\bm o_2\bm o_3\bm p,
        &
        \bm q&\longmapsto
        \overline{\bm o_0\bm q}\cap
        \triangle\bm o_2\bm o_3\bm p.
    \end{align*}
    Both maps are bijections on their respective subpatches. Together the two
    local parametrizations cover \(T\cap\Gamma\). No global-bijection claim is
    made for the ordinary union of the two parameter triangles.
\end{itemize}
\label{thm:main}
\end{theorem}

\begin{proof}
Let $d$ denote the signed distance function to $\Gamma$, chosen with the same sign as $F$ in a tubular neighborhood of $\Gamma$. Since $\nabla F\neq 0$ on $\Gamma$, the sign of $F$ agrees with the sign of $d$ near $\Gamma$. 

We first give a useful consequence of the consistency condition. Let $S$ be any simplex contained in $T$, and suppose that all vertices of $S$ have the same sign. Then
\begin{equation}
S\cap\Gamma=\emptyset.
\label{eq:same-sign-simplex-empty}
\end{equation}
Indeed, assume for example that all vertices $a_i$ of $S$ satisfy $d(a_i)>0$. Since $T$ is consistent with $\Gamma$,
\begin{equation}
d(a_i)=\operatorname{dist}(a_i,\Gamma)>K_\Gamma h_T^2.
\end{equation}
Suppose, for contradiction, that there exists $x\in S\cap\Gamma$. Since $x\in S$, there are coefficients $\lambda_i\ge 0$ with $\sum_i\lambda_i=1$ such that
\begin{equation}
x=\sum_i\lambda_i a_i.
\end{equation}
Taylor expansion of the signed distance function at $x\in\Gamma$ and the estimate of $d''$ in Lemma \ref{lem:h2} give
\begin{equation}
d(a_i)
=
n(x)\cdot(a_i-x)+R_i,
\qquad
|R_i|\le K_\Gamma h_T^2,
\end{equation}
where $n(x)=\nabla d(x)$. Hence
\begin{equation}
n(x)\cdot(a_i-x)>0
\end{equation}
for every vertex $a_i$ of $S$. Therefore
\begin{equation}
0
=
n(x)\cdot(x-x)
=
\sum_i\lambda_i n(x)\cdot(a_i-x)
>
0,
\end{equation}
which is impossible. The case $d(a_i)<0$ for all vertices is identical. This proves \eqref{eq:same-sign-simplex-empty}.

We now prove the three cases.

\medskip

\noindent\textbf{Case 1.}
Assume that all four vertices of $T$ have the same sign. Applying \eqref{eq:same-sign-simplex-empty} with $S=T$, we obtain
\begin{equation}
T\cap\Gamma=\emptyset.
\end{equation}

\medskip

\noindent\textbf{Case 2.}
Assume, without loss of generality, that $o_0$ has one sign and $o_1,o_2,o_3$ have the opposite sign. Let
\begin{equation}
B=\operatorname{conv}{o_1,o_2,o_3}
\end{equation}
be the face opposite $o_0$.

Since the vertices of $B$ have the same sign, \eqref{eq:same-sign-simplex-empty} gives
\begin{equation}
B\cap\Gamma=\emptyset.
\end{equation}
Therefore $F$ has one fixed sign on $B$, opposite to the sign of $F(o_0)$.

For each $y\in B$, define the segment
\begin{equation}
\ell_y={(1-t)o_0+ty:0\le t\le 1}.
\end{equation}
The endpoints $o_0$ and $y$ have opposite signs, so by continuity $\ell_y$ intersects $\Gamma$ at least once.

We claim that this intersection is unique. If $\ell_y$ intersected $\Gamma$ more than once, counted with multiplicity, then Lemma~2.1 applied to the segment $\ell_y$ would imply
\begin{equation}
\max{\operatorname{dist}(o_0,\Gamma),\operatorname{dist}(y,\Gamma)}
\le
K_\Gamma |\ell_y|^2
\le
K_\Gamma h_T^2.
\end{equation}
In particular,
\begin{equation}
\operatorname{dist}(o_0,\Gamma)\le K_\Gamma h_T^2,
\end{equation}
which contradicts the consistency condition. Hence every segment $\ell_y$ intersects $\Gamma$ exactly once.

Define
\begin{equation}
\phi:T\cap\Gamma\longrightarrow B
\end{equation}
by sending $x\in T\cap\Gamma$ to the intersection point of the ray from $o_0$ through $x$ with the face $B$. Since each segment from $o_0$ to a point of $B$ intersects $\Gamma$ exactly once, the map $\phi$ is both injective and surjective. Therefore $\phi$ is bijective.

\medskip

 \noindent\textbf{Case 3.}
The edge \(\overline{o_1o_2}\) has exactly one intersection \(p\) with
\(\Gamma\); otherwise Lemma~\ref{lem:h2} would contradict the consistency
condition at \(o_1\) or \(o_2\). Set
\[
B_1=\operatorname{conv}\{o_0,o_1,p\},
\qquad
B_2=\operatorname{conv}\{o_2,o_3,p\}.
\]
The same segment argument used in Case 2 shows that
\[
B_1\cap\Gamma=\{p\},
\qquad
B_2\cap\Gamma=\{p\}.
\]
Indeed, an additional zero on \(B_1\), for example, would lie on a segment
whose endpoints have the same sign, producing at least two intersections
counted with multiplicity and contradicting Lemma~\ref{lem:h2}; the argument
for \(B_2\) is identical.

For every \(y\in B_1\setminus\{p\}\), the endpoints \(o_3\) and \(y\) have
opposite signs, while the segment \(\overline{o_3p}\) already meets
\(\Gamma\) at \(p\). Existence follows by continuity, and uniqueness follows
from Lemma~\ref{lem:h2} and the consistency condition at \(o_3\). Hence
\(\varphi_1:T_1\cap\Gamma\to B_1\) is bijective. The same argument, with apex
\(o_0\), proves that \(\varphi_2:T_2\cap\Gamma\to B_2\) is bijective.
Since \(T=T_1\cup T_2\), the two parametrizations cover \(T\cap\Gamma\).
Their subpatches may share only the curve contained in the common face
\(T_1\cap T_2\), which has zero surface measure, so the two local surface
integrals may be added without a global-bijection claim.
\end{proof}

\begin{figure}[htbp]
        \begin{tabular}{cc}
         \includegraphics[width=0.45\textwidth]{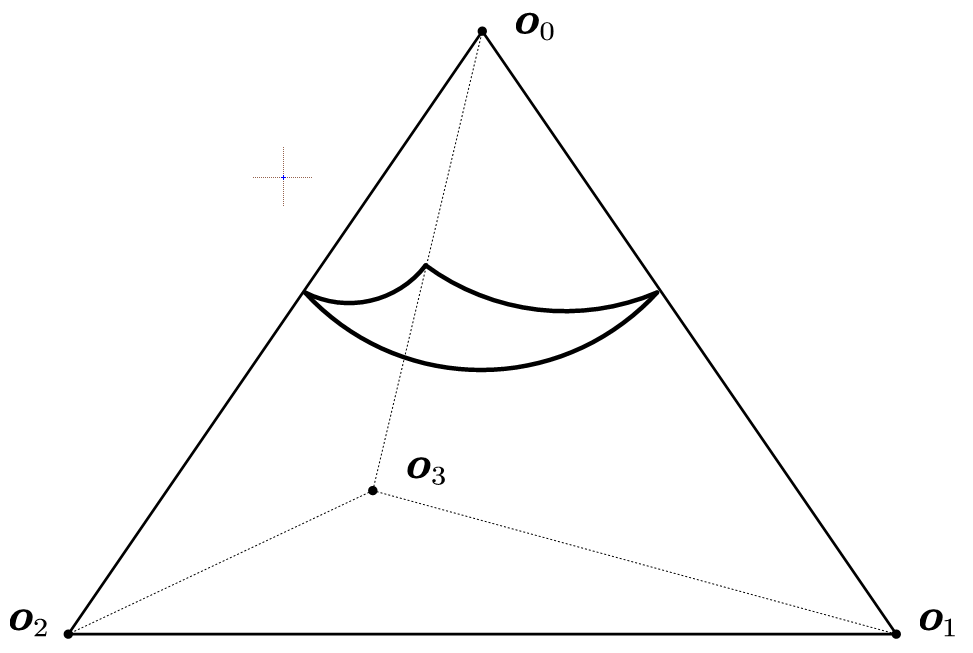}    &   \includegraphics[width=0.45\textwidth]{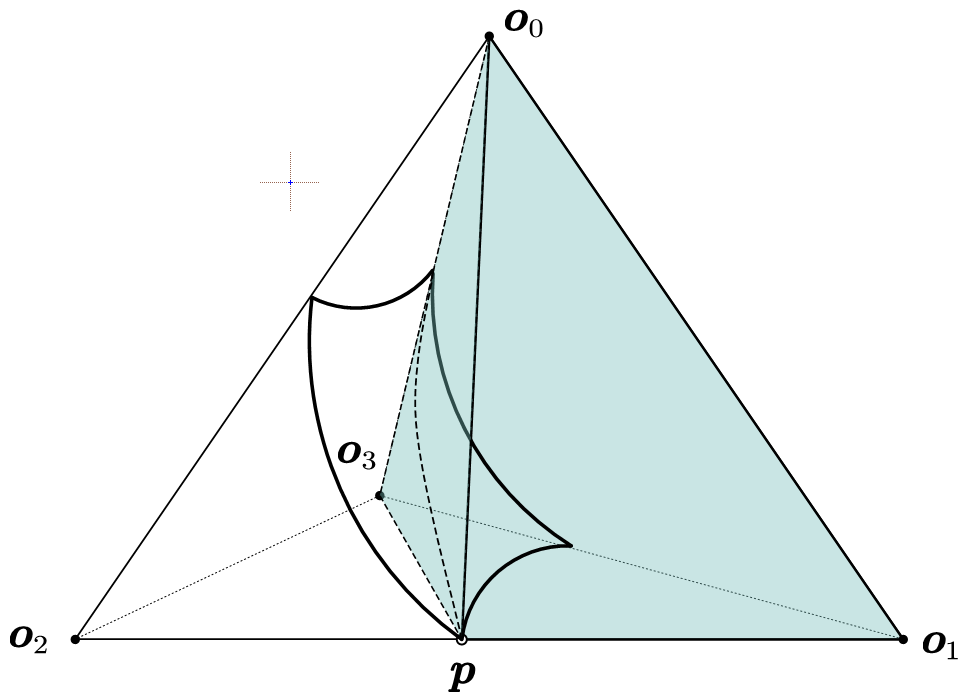}\\
           case 2  & case 3 
        \end{tabular}
     \caption{Illustration of \(\Gamma \cap T\) in Theorem \ref{thm:main}.}
    \label{fig:cut-3D}
\end{figure}

Similarly, for a smooth compact curve \(\gamma\subset\mathbb R^2\), let
\[
K_\gamma=\max_{x\in\gamma}|\kappa(x)|,
\]
where \(\kappa\) is the curvature of \(\gamma\), and let \(r_\gamma\) denote
its reach.

\begin{definition}\label{def:consistent2d}
A triangle \(T=\triangle\bm o_0\bm o_1\bm o_2\) is
\textbf{consistent} with \(\gamma\) if, for a constant \(c_0>0\) independent
of \(h_T=\operatorname{diam}(T)\),
\[
h_T< r_\gamma,\qquad
h_TK_\gamma\le\frac14,
\]
and
\[
\operatorname{dist}(\bm o_i,\gamma)
>
\max\{c_0h_T,K_\gamma h_T^2\},
\qquad i=0,1,2.
\]
\end{definition}

\begin{theorem}
\(\gamma = \{x \in \mathbb{R}^2: F(x) = 0\}\) is a smooth compact curve.
Let \(T=\triangle {\bm{o}_0\bm{o}_1\bm{o}_2}\) be a triangle consistent
with \(\gamma\) in the sense of Definition~\ref{def:consistent2d}.
\begin{itemize}
    \item[] Case 1: $$\text{\rm sgn}(F(\bm{o}_0))=\text{\rm sgn}(F(\bm{o}_1))=\text{\rm sgn}(F(\bm{o}_2)).$$ 
    In this case $$\gamma \cap T = \emptyset.$$
    
    \item[(2)] Case 2: As shown in Fig. \ref{fig:cut-2D} 
    $$-\text{\rm sgn}(F(\bm{o}_0))=\text{\rm sgn}(F(\bm{o}_1))=\text{\rm sgn}(F(\bm{o}_2))$$ 
    In this case, define the map 
    \begin{align*}
        \varphi:\quad &T\cap \gamma \longrightarrow\;  \overline{\bm{o}_1\bm{o}_2}\\
    & \bm{p}\;\longmapsto \;\overline{\bm{o}_0 \bm{p}}\cap \overline{ \bm{o}_1\bm{o}_2}. 
    \end{align*}
    $\varphi$ is bijective.
\end{itemize}
\label{thm:2D}
\end{theorem}

 \begin{figure}[htbp]
     \centering
    \includegraphics[width=0.5\textwidth]{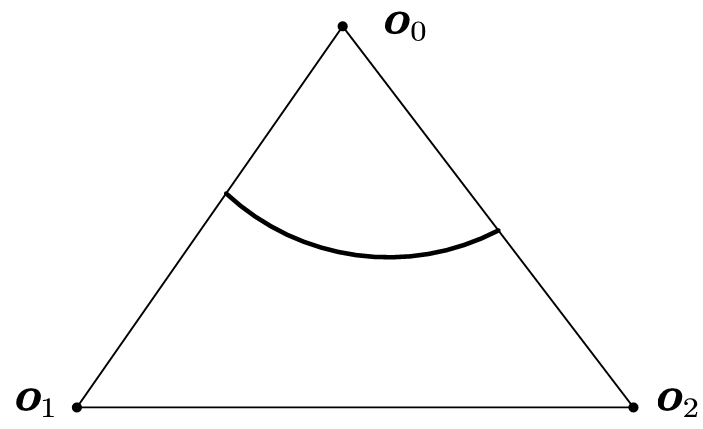}
     \caption{Illustration of \(\gamma \cap T\) in Theorem \ref{thm:2D}.}
     \label{fig:cut-2D}
 \end{figure}

\subsection{Vertex displacement for a consistent background mesh}
\label{subsec:vertex-displacement}

In this subsection we describe a simple mesh adjustment procedure which ensures that the consistent requirement in Definition \ref{def:consistent} is satisfied. In this step, vertices close to the zero level set are moved slightly away from the surface, so that no vertex of a cut tetrahedron lies too close to $\Gamma$. This avoids nearly degenerate intersection patterns.

Let
\[
\Gamma={x\in\mathbb R^3:F(x)=0}
\]

be a compact smooth regular level set. Let $d(x)$ denote the signed distance function to $\Gamma$ in a tubular neighborhood of $\Gamma$, and let
\[
n(x)=\nabla d(x)
\]
be the corresponding unit normal. Let $\mathcal T_h^0$ be an initial tetrahedral background mesh with mesh size $h$. We assume that $\mathcal T_h^0$ is shape-regular and quasi-uniform. More precisely, there exist constants $c_{\rm m},C_{\rm m}>0$, independent of $h$, such that for every tetrahedron $T\in\mathcal T_h^0$,
\[
c_{\rm m}h\le h_T\le C_{\rm m}h,
\]
where
$
h_T=\operatorname{diam}(T).
$
In practice, $\mathcal T_h^0$ is easy to obtain by subdividing a Cartesian background grid into tetrahedra.

Choose a constant $\rho>0$, independent of $h$, sufficiently small. For each vertex $a$ of $\mathcal T_h^0$, define the adjusted vertex $\widetilde a$ as follows.

If
$
|d(a)|\ge \rho h,
$
we leave the vertex unchanged, $\widetilde a=a.$

If
$
|d(a)|<\rho h,
$
we move the vertex along the normal direction to the offset level set $d=\pm \rho h$. Let $\pi_\Gamma(a)$ be the closest-point projection of $a$ onto $\Gamma$. Then we define
\[
\widetilde a
=
\pi_\Gamma(a)
+
\operatorname{sign}(d(a))\,\rho h\,n(\pi_\Gamma(a)).
\]
If $d(a)=0$, one may first apply an arbitrarily small generic translation of the background grid, so that no vertex lies exactly on $\Gamma$. Equivalently, one may assign a sign by a deterministic convention. Since vertex--surface coincidences are nongeneric, this case does not occur for a generic background grid.

The adjusted tetrahedral mesh $\mathcal T_h$ is obtained by replacing every vertex $a$ of $\mathcal T_h^0$ by $\widetilde a$, while keeping the same mesh connectivity.
The procedure can be summarized as Algorithm \ref{alg:vertex-displacement}.

\begin{algorithm}[t]
\caption{Vertex displacement away from the level-set surface}
\label{alg:vertex-displacement}
\begin{algorithmic}[1]
\Require Initial tetrahedral mesh $\mathcal T_h^0$ with mesh size $h$; level-set function $F$; parameter $\rho>0$.
\Ensure Adjusted tetrahedral mesh $\mathcal T_h$.

\For{each vertex $a$ of $\mathcal T_h^0$}
\State Compute the signed distance $d(a)$ to $\Gamma={x:F(x)=0}$.
\If{$|d(a)|\ge \rho h$}
\State Set $\widetilde a=a$.
\Else
\State Compute the closest-point projection $y=\pi_\Gamma(a)$.
\State Compute the unit normal $n(y)=\nabla d(y)$.
\State Set
\[
\widetilde a
=
y+\operatorname{sign}(d(a))\,\rho h\,n(y).
\]
\EndIf
\EndFor

\State Construct $\mathcal T_h$ by replacing every vertex $a$ of $\mathcal T_h^0$ by $\widetilde a$ and keeping the same tetrahedral connectivity.

\end{algorithmic}
\end{algorithm}


We next prove the geometric properties of the adjusted mesh.  The argument is
stated for simplices in dimension \(m=2\) or \(m=3\); in the present section
\(m=3\).

\begin{lemma}
\label{lem:shape-regularity-after-displacement}
Suppose that the initial mesh \(\mathcal T_h^0\) is shape-regular and
quasi-uniform.  There exists \(\rho_*>0\), depending only on the
shape-regularity constants of \(\mathcal T_h^0\), such that, if
\(0<\rho\le\rho_*\), the adjusted mesh \(\mathcal T_h\) remains conforming,
shape-regular, and quasi-uniform.  No simplex is inverted, and there are
constants \(c_1,C_1>0\), independent of \(h\), such that
\[
 c_1h\le h_T\le C_1h
 \qquad\text{for every }T\in\mathcal T_h .
\]
\end{lemma}

\begin{proof}
Let \(T^0=\operatorname{conv}\{a_0,\ldots,a_m\}\) be an initial simplex and
write its edge matrix as
\[
B_{T^0}
=
\begin{bmatrix}
a_1-a_0&\cdots&a_m-a_0
\end{bmatrix}.
\]
Shape-regularity and quasi-uniformity are equivalent to the existence of
constants \(c_s,C_s>0\), independent of \(T^0\) and \(h\), such that
\[
c_sh\le \sigma_{\min}(B_{T^0})
\le \sigma_{\max}(B_{T^0})\le C_sh .
\]
Set \(\delta_i=\widetilde a_i-a_i\).  The signed-distance construction gives
\(|\delta_i|\le\rho h\).  The edge matrix of the adjusted simplex is
\[
B_T=B_{T^0}+E_T,\qquad
E_T=
\begin{bmatrix}
\delta_1-\delta_0&\cdots&\delta_m-\delta_0
\end{bmatrix},
\]
and hence
\[
\|E_T\|_2\le 2\sqrt m\,\rho h .
\]
Weyl's singular-value perturbation inequality therefore gives
\[
\sigma_{\min}(B_T)
\ge(c_s-2\sqrt m\,\rho)h,
\qquad
\sigma_{\max}(B_T)
\le(C_s+2\sqrt m\,\rho)h .
\]
Choose, for example,
\(\rho_*=c_s/(4\sqrt m)\).  Then
\[
\frac{c_s}{2}h
\le\sigma_{\min}(B_T)
\le\sigma_{\max}(B_T)
\le\left(C_s+\frac{c_s}{2}\right)h .
\]
    The same estimate holds for the homotopy
    \(B_{T^0}+tE_T\), \(0\le t\le1\).  Its determinant never vanishes, so its
    orientation cannot change; consequently no simplex is inverted.

    For completeness, the local determinant argument also gives a global mesh
    embedding.  Let \(u_h\) be the continuous piecewise-affine function whose
    nodal values are \(\delta_i\), and set \(\Phi_h(x)=x+u_h(x)\).  On
    \(T^0\),
    \[
    D u_h=E_TB_{T^0}^{-1},
    \qquad
    \|D u_h\|_2
    \le \frac{2\sqrt m\,\rho}{c_s}
    \le\frac12 .
    \]
    The computational domain is a convex polygonal or polyhedral box, so the
    piecewise-affine function \(u_h\) is globally Lipschitz with the same
    bound.  Hence, for all \(x,y\) in the box,
    \[
    |\Phi_h(x)-\Phi_h(y)|
    \ge
    \left(1-\frac{2\sqrt m\,\rho}{c_s}\right)|x-y|
    \ge\frac12|x-y|.
    \]
    Thus \(\Phi_h\) is injective.  Since a shared vertex is moved only once
    and the connectivity is unchanged, adjacent simplices retain their common
    faces, while nonadjacent simplices cannot overlap.  The adjusted
    simplicial complex is therefore conforming.  The singular-value bounds
    prove uniform shape-regularity and quasi-uniformity, as well as the
    asserted diameter estimates.
\end{proof}

\begin{lemma}
\label{lem:vertex-distance-after-displacement}
Let \(T\in\mathcal T_h\) be a cut tetrahedron.  There are constants
\(c_0,C_0>0\), independent of \(h\), such that every vertex \(a\) of \(T\)
satisfies
\begin{equation}
2c_0h_T\le |d(a)|\le C_0h_T .
\label{eq:vertex-distance-condition}
\end{equation}
\end{lemma}

\begin{proof}
Every adjusted mesh vertex satisfies \(|d(a)|\ge\rho h\): an unchanged vertex
has this property by the displacement criterion, while a moved vertex lies on
the offset surface \(|d|=\rho h\).  By
Lemma~\ref{lem:shape-regularity-after-displacement},
\(h_T\le C_1h\).  Therefore
\[
|d(a)|\ge\rho h\ge\frac{\rho}{C_1}h_T.
\]
Taking \(c_0=\rho/(2C_1)\) gives the lower bound in
\eqref{eq:vertex-distance-condition}.  Since \(T\) is cut, choose
\(z\in T\cap\Gamma\).  The signed distance is \(1\)-Lipschitz, and hence
\[
|d(a)|\le |a-z|\le h_T.
\]
Thus one may take \(C_0=1\).
\end{proof}

\begin{proposition}
\label{prop:displacement-implies-consistency}
Let \(\Gamma\) be a compact smooth surface with reach \(r_\Gamma>0\), and let
\(K_\Gamma\) be the curvature constant defined above.  If \(h\) is
sufficiently small, then every cut tetrahedron of the adjusted mesh is
consistent with \(\Gamma\) in the sense of Definition~\ref{def:consistent}.
The same assertion holds for triangles and
Definition~\ref{def:consistent2d}.
\end{proposition}

\begin{proof}
By Lemma~\ref{lem:shape-regularity-after-displacement},
\(\max_T h_T\le C_1h\).  Thus, for sufficiently small \(h\),
\[
h_T<r_\Gamma,\qquad
h_TK_\Gamma\le\frac14,\qquad
K_\Gamma h_T<2c_0
\]
for every adjusted simplex; the last condition is void if \(K_\Gamma=0\).
For each vertex \(a\) of a cut simplex,
Lemma~\ref{lem:vertex-distance-after-displacement} gives
\[
\operatorname{dist}(a,\Gamma)
=|d(a)|
\ge2c_0h_T
>
\max\{c_0h_T,K_\Gamma h_T^2\}.
\]
All requirements in the corresponding consistency definition are therefore
satisfied.
\end{proof}

\paragraph{Direct displacement using a general level-set function.}
The signed-distance construction is convenient for the proof and gives an
exact prescribed offset.  It is not necessary, however, to compute the closest
point when the level-set function \(F\) itself is available.  We now give a
direct alternative and show that it has the same mesh-quality consequence.
Let
\[
\mathcal N_\delta=\{x:\operatorname{dist}(x,\Gamma)<\delta\}
\]
be a fixed tubular neighborhood.  Since \(F\in C^2\) and
\(\nabla F\ne0\) on the compact surface \(\Gamma\), after decreasing
\(\delta\) if needed there exist positive constants
\(m_F,M_F,L_F,c_F,C_F\) such that, for \(x\in\mathcal N_\delta\),
\begin{equation}
m_F\le\|\nabla F(x)\|\le M_F,\qquad
\|D^2F(x)\|\le L_F,\qquad
c_F|d(x)|\le |F(x)|\le C_F|d(x)|.
\label{eq:F-distance-equivalence}
\end{equation}

Choose a sufficiently small constant \(\rho_g>0\).  For a mesh vertex \(a\)
in \(\mathcal N_\delta\), define
\[
\delta_F(a)=\frac{|F(a)|}{\|\nabla F(a)\|}.
\]
If \(\delta_F(a)\ge\rho_gH\), leave the vertex unchanged.  Otherwise set
\begin{equation}
\widetilde a
=
a+2\rho_gh\,
\frac{\operatorname{sign}(F(a))\nabla F(a)}
     {\|\nabla F(a)\|}.
\label{eq:direct-gradient-displacement}
\end{equation}
When \(F(a)=0\), either sign may be assigned by a fixed deterministic
convention.  Vertices outside \(\mathcal N_\delta\) are left unchanged.  If
\(\Gamma\) is compactly contained in the computational box, then for
sufficiently small \(h\) this operation does not move boundary vertices.

\begin{proposition}
\label{prop:gradient-displacement}
For sufficiently small \(\rho_g\) and \(h\), the direct gradient displacement
\eqref{eq:direct-gradient-displacement} produces a conforming,
shape-regular, and quasi-uniform mesh.  Moreover, every adjusted vertex in the
active band satisfies
\[
\operatorname{dist}(\widetilde a,\Gamma)\ge c_gh
\]
with \(c_g>0\) independent of \(h\).  Consequently, for sufficiently small
\(h\), every cut simplex is consistent with \(\Gamma\).
\end{proposition}

\begin{proof}
The displacement of each vertex has length at most \(2\rho_gh\).  Repeating
the edge-matrix argument in
Lemma~\ref{lem:shape-regularity-after-displacement}, now with
\(\|E_T\|_2\le4\sqrt m\,\rho_gH\), proves shape-regularity,
quasi-uniformity, conformity, and preservation of orientation whenever
\(\rho_g<c_s/(8\sqrt m)\).

It remains to prove separation from \(\Gamma\).  An unchanged vertex in the
active band satisfies
\[
|F(a)|\ge\rho_gh\|\nabla F(a)\|
\ge\rho_gm_Fh.
\]
For a moved vertex set
\[
v_a=\frac{\operatorname{sign}(F(a))\nabla F(a)}
          {\|\nabla F(a)\|}.
\]
Taylor's theorem along the segment from \(a\) to
\(\widetilde a=a+2\rho_gHv_a\) gives
\[
\operatorname{sign}(F(a))F(\widetilde a)
=
|F(a)|+2\rho_gh\|\nabla F(a)\|+R_a,
\qquad
|R_a|\le 2L_F\rho_g^2h^2 .
\]
The same formula holds with the assigned sign when \(F(a)=0\).  For
sufficiently small \(h\),
\[
|F(\widetilde a)|\ge \rho_gm_Fh.
\]
Equation~\eqref{eq:F-distance-equivalence} then yields
\[
\operatorname{dist}(\widetilde a,\Gamma)
\ge \frac{\rho_gm_F}{C_F}h.
\]
    The same use of \eqref{eq:F-distance-equivalence} applies to the unchanged
    vertices considered above.  Thus all vertices in the tubular band have
    distance at least \(c_gh\), with \(c_g=\rho_gm_F/C_F\).
    If a simplex is cut by \(\Gamma\), every one of its vertices is within
    \(h_T\) of \(\Gamma\); hence, for sufficiently small \(h\), all of its
    vertices belong to \(\mathcal N_\delta\).  The proof of
    Lemma~\ref{lem:vertex-distance-after-displacement} and
    Proposition~\ref{prop:displacement-implies-consistency} now applies verbatim,
    with \(c_g\) in place of \(\rho\).
\end{proof}

This displacement procedure actually implies a uniform transversality property for the regularity analysis in Section \ref{sec:order}. Let $a$ be a vertex of a cut tetrahedron used as the apex in the local parametrization, and let $z\in T\cap\Gamma$. Since $d(z)=0$, Taylor expansion of the signed distance function gives
\[
d(a)
=
\nabla d(z)\cdot(a-z)+O(h_T^2)
=
n(z)\cdot(a-z)+O(h_T^2).
\]
The displacement procedure gives,
\[
|d(a)|\ge c_0h_T.
\]
For sufficiently small $h_T$, the quadratic remainder is bounded by $(c_0/2)h_T$, and hence
\[
|n(z)\cdot(a-z)|
\ge
\frac{c_0}{2}h_T.
\]
Since $|a-z|\le h_T$, it follows that
\begin{equation}
\left|
n(z)\cdot\frac{a-z}{|a-z|}
\right|
\ge c_2>0,
\label{eq:uniform-transversality}    
\end{equation}
where $c_2$ is independent of $h$. Thus the direction from the apex vertex to the surface is uniformly transverse to the tangent plane of $\Gamma$. This prevents the denominators appearing in the local change-of-variables formula from becoming singular and is important in the regularity analysis in Section \ref{sec:order}.


\section{2D Parametrization}

\subsection{Parametrization of Curves}
\label{sec:2dcurve}

Let \(T=\triangle \bm{o}_0\bm{o}_1\bm{o}_2\) be consistent with \(\Gamma\).
By Theorem \ref{thm:2D}, \(\Gamma \cap T\) has a direct parametrization; see
Fig. \ref{fig:para-2D}.

\begin{figure}[htbp]
    \centering
    \includegraphics[width=0.5\textwidth]{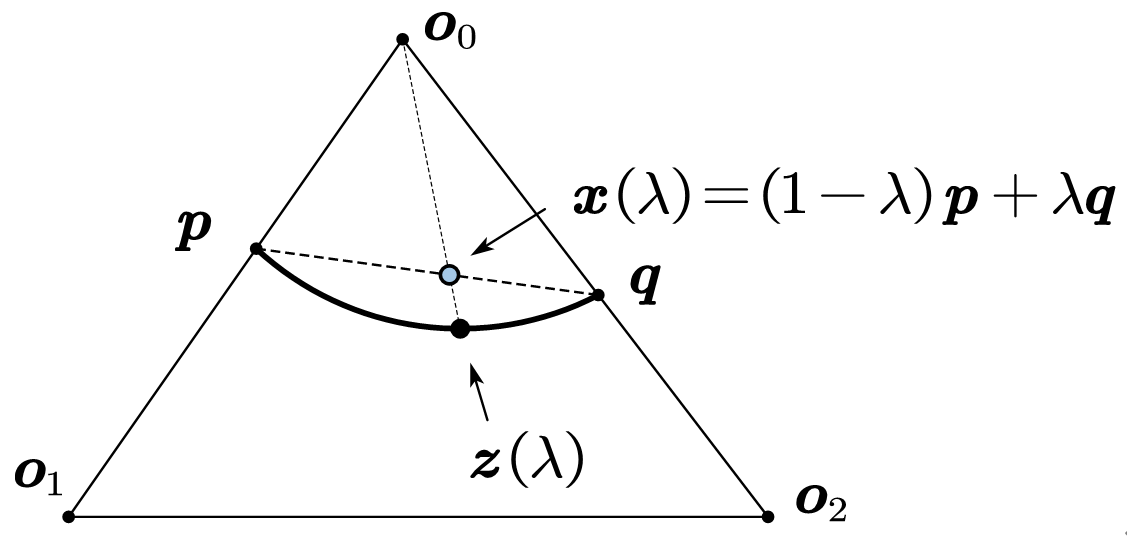}
    \caption{Parametrization of \(\Gamma \cap T\).}
    \label{fig:para-2D}
\end{figure}

Without loss of generality, we assume $\bm{o}_0=(0,0)$. As shown in Fig. \ref{fig:para-2D}, 
$\bm{z}(\lambda),\;\lambda\in [0,1]$  gives a natural parametrization of \(T\cap \Gamma\).
With this parametrization, the integral over $T\cap \Gamma$ can be transformed to an integral over $[0,1]$.
\[
\int_{T\cap \Gamma} f(x) \, ds = \int_0^1 f(\bm{z}(\lambda)) |\bm{z}_\lambda(\lambda)| d\lambda,
\]
The standard Gauss-Legendre quadrature applies to compute the integral with high order accuracy.

In the implementation of Gauss-Legendre quadrature, we need to compute $\bm{z}(\lambda)$ and $\bm{z}_\lambda(\lambda)$ at Gauss-Legendre points. With any given $\lambda\in [0,1]$, $\bm{z}(\lambda)$ can be obtained by solving a nonlinear equation.
First, we solve $\alpha(\lambda)$,
\[
 F(\alpha(\lambda)\bm{x}(\lambda))=0,\quad \bm{x}(\lambda) = (1-\lambda)\bm{p} + \lambda\bm{q}
\]
Then $\bm{z}(\lambda)$ is given by 
\begin{align}
    \bm{z}(\lambda)=&\alpha(\lambda)\bm{x}(\lambda) 
    \label{eq:z}
\end{align}
With $\alpha(\lambda)$, the computation of $\bm{z}_\lambda(\lambda)$ is straightforward. 
Using $F(\bm{z}(\lambda))=0$ and \eqref{eq:z}, we have
\begin{align*}   \bm{z}_\lambda(\lambda)=&\alpha_\lambda(\lambda)\bm{x}(\lambda)+\alpha(\lambda)\bm{x}_\lambda(\lambda),\quad   \nabla F(\bm{z}(\lambda))\cdot \bm{z}_\lambda(\lambda)=0, 
\end{align*}
and 
\begin{align*}
\alpha_\lambda(\lambda)=-\frac{\alpha(\lambda)\bm{x}_\lambda(\lambda)\cdot \nabla F(\bm{z}(\lambda))}{\bm{x}(\lambda)\cdot \nabla F(\bm{z}(\lambda))}.    
\end{align*}

\subsection{Parametrization of 2D Areas}
\label{sec:2dregion}
Now we turn to consider the integration over the bounded region surrounded by $\Gamma$,  
$$\int_\Omega f(x){\rm d} x,$$
with $\Omega=\{\bm{x}\in \mathbb{R}^2: F(\bm{x})\le 0\}$.
 
Let \(U\) be a polygonal computational box with
\(\overline\Omega\subset U\), and let
\(\mathcal T_h=\{T_i\}\) be a conforming triangulation of \(U\).  We require
consistency only for triangles cut by \(\Gamma\).  Then
\[
\int_\Omega f(\bm{x}) \, d\bm{x} = \sum_{i} \int_{\Omega \cap T_i} f(\bm{x}) \, {\rm d}\bm{x}.
\]
 
The three types of elements are treated as follows.  If \(T_i\subset\Omega\),
we apply the affine pullback of a fixed reference-triangle rule that is exact
for polynomials of total degree at most \(2N_q-1\).  One such rule is obtained
from the two-dimensional Duffy map by using \(N_q+1\) Gauss--Legendre nodes
in the collapsed coordinate and \(N_q\) nodes in the other coordinate.  If
\(T_i\cap\overline\Omega=\emptyset\), its contribution is zero.  On a cut
triangle, Theorem~\ref{thm:2D} gives the local parametrization described
below.  If the exceptional vertex lies outside \(\Omega\), the same
parametrization is applied to \(\Omega^c\cap T_i\), and we use
\[
\int_{\Omega \cap T_i} f(\bm{x}) \, {\rm d}\bm{x}
=
\int_{T_i} f(\bm{x}) \, {\rm d}\bm{x}
-
\int_{\Omega^c \cap T_i} f(\bm{x}) \, {\rm d}\bm{x}.
\]
The whole-element integral in this subtraction is evaluated with the same
reference-triangle rule as an interior element.
\begin{figure}[htbp]
    \centering
    \includegraphics[width=0.5\textwidth]{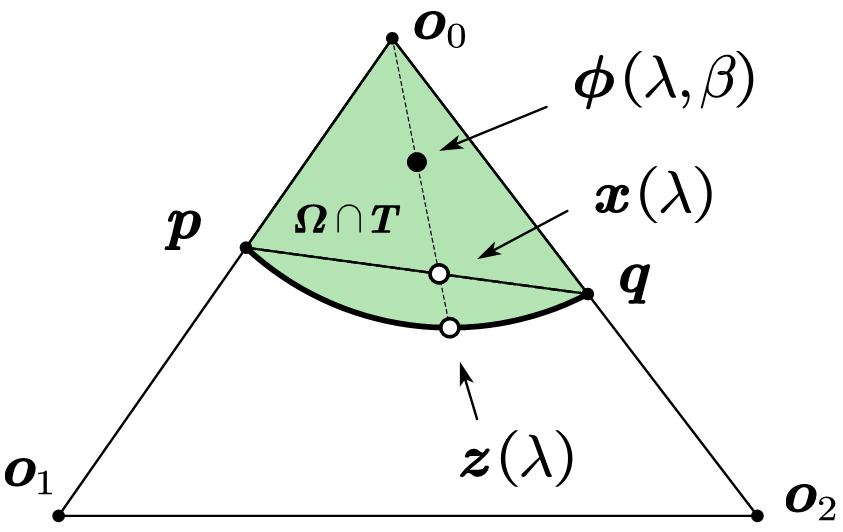}
    \caption{Parametrization of \(\Omega \cap T\) in $\mathbb{R}^2$.}
    \label{fig:para-region-2D}
\end{figure}
As shown in Fig. \ref{fig:para-region-2D}, the parametrization of $\Omega\cap T$ is straightforward.
\begin{align}
    \bm{\phi}(\lambda,\beta)=\beta \bm{z}(\lambda),\quad \beta\in [0,1].
\end{align}

By change of variable formula, we have
$$
\int_{\Omega\cap T} f
=
\int_{0}^1\int_0^1
\left|\operatorname{det} J(\bm{\phi})\right|
f(\bm{\phi}(\lambda,\beta))\,d\beta\,d\lambda
$$
where $J(\phi)$ denotes the Jacobi matrix of $\bm{\phi}$,
$$J(\bm{\phi})=\left[\beta\bm{z}_\lambda(\lambda),\bm{z}(\lambda)\right]\in \mathbb{R}^{2\times 2}$$

Also, standard Gauss-Legendre quadrature rule is applied to compute the integral. The computation of $\bm{\phi}(\lambda,\beta)$ and $J(\bm{\phi})$ follows the same procedure as in the previous subsection.

\section{3D Parametrization}
\subsection{Parametrization of Surfaces}
\label{sec:3dplane}

We now introduce the numerical method to compute the surface integral
over
\[
    \Gamma=\{\bm{x}\in \mathbb{R}^3:F(\bm{x})=0\}.
\]
Assume that the surface is covered by a tetrahedral mesh,
\[
    \Gamma\subset \mathcal{T}=\bigcup_i T_i,
\]
and that each cut tetrahedron is consistent with $\Gamma$ in the sense of
Definition~\ref{def:consistent}. Then the surface integral is decomposed into
local contributions
\[
    \int_{\Gamma} f\,dS
    =
    \sum_i \int_{T_i\cap\Gamma} f\,dS .
\]

By Theorem~\ref{thm:main}, each cut tetrahedron has a simple intersection
pattern with $\Gamma$. Case 1 is trivial. Case 3 can be reduced to Case 2 by
splitting the tetrahedron into two sub-tetrahedra, as shown in
Fig.~\ref{fig:cut-3D}. Hence it suffices to describe the parametrization in
Case 2.

Assume, after a translation of coordinates, that the exceptional vertex is
\[
    \bm{o}_0=(0,0,0).
\]
Let $\bm p,\bm q,\bm r$ be the three intersection points between $\Gamma$ and
the three edges connecting $\bm{o}_0$ to the opposite face. More precisely,
after a suitable relabeling,
\[
    \bm p\in \overline{\bm{o}_0\bm{o}_1}\cap\Gamma,\qquad
    \bm q\in \overline{\bm{o}_0\bm{o}_2}\cap\Gamma,\qquad
    \bm r\in \overline{\bm{o}_0\bm{o}_3}\cap\Gamma .
\]
Theorem~\ref{thm:main} guarantees that these intersections are unique.
 \begin{figure}[htbp]
    \centering
\includegraphics[width=0.5\textwidth]{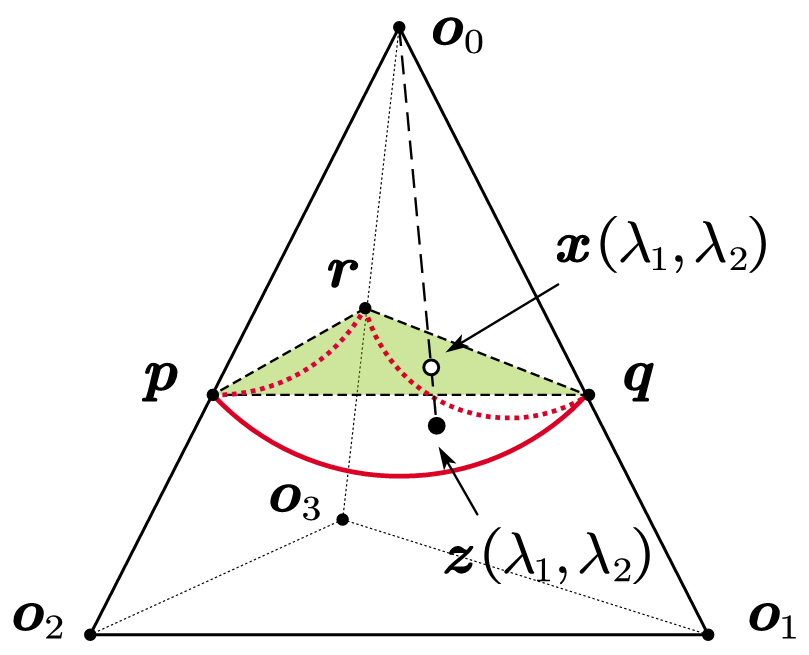}
     \caption{Parametrization of \(\Gamma \cap T\) in $\mathbb{R}^3$.}
     \label{fig:para-3D}
     \end{figure}
We use the reference triangle
\[
    \widehat K
    =
    \left\{
    \bm\lambda=(\lambda_1,\lambda_2)\in\mathbb{R}^2:
    \lambda_1\ge0,\ \lambda_2\ge0,\ \lambda_1+\lambda_2\le1
    \right\}.
\]
On $\widehat K$, define the affine chord-triangle parametrization
\begin{equation}
\label{eq:x-triangular-3d}
    \bm{x}(\lambda_1,\lambda_2)
    =
    (1-\lambda_1-\lambda_2)\bm p
    +\lambda_1\bm q
    +\lambda_2\bm r .
\end{equation}
 In particular,
\[
    \partial_{\lambda_1}\bm{x}=\bm q-\bm p,\qquad
    \partial_{\lambda_2}\bm{x}=\bm r-\bm p,\qquad
    \partial_{\bm\lambda}^{\nu}\bm{x}=0
    \quad\text{for all }|\nu|\ge2 .
\]

For each $\bm\lambda\in\widehat K$, the ray
\[
    \{\alpha\bm{x}(\bm\lambda):\alpha>0\}
\]
intersects $\Gamma$ exactly once. Therefore there exists a unique positive
function $\alpha(\bm\lambda)$ such that
\begin{equation}
\label{eq:alpha-equation-triangle}
    F\bigl(\alpha(\bm\lambda)\bm{x}(\bm\lambda)\bigr)=0.
\end{equation}
We define
\begin{equation}
\label{eq:z-triangular-3d}
    \bm{z}(\bm\lambda)
    =
    \alpha(\bm\lambda)\bm{x}(\bm\lambda),
    \qquad
    \bm\lambda\in\widehat K .
\end{equation}
Then $\bm z:\widehat K\to T\cap\Gamma$ gives a parametrization of the surface
patch.

The surface integral over one cut tetrahedron is therefore
\begin{equation}
\label{eq:surface-integral-triangle}
    \int_{T\cap\Gamma} f(\bm{x})\,dS
    =
    \int_{\widehat K}
    f(\bm z(\bm\lambda))
    \left|
        \partial_{\lambda_1}\bm z(\bm\lambda)
        \times
        \partial_{\lambda_2}\bm z(\bm\lambda)
    \right|
    \,d\bm\lambda .
\end{equation}

At quadrature nodes, $\bm z(\bm\lambda)$ is obtained by solving the scalar
nonlinear equation~\eqref{eq:alpha-equation-triangle}. Its first derivatives are
computed by differentiating
\[
    F(\bm z(\bm\lambda))=0.
\]
For $i=1,2$, we have
\begin{equation}
\label{eq:z-derivative-triangle}
    \partial_{\lambda_i}\bm z
    =
    \partial_{\lambda_i}\alpha\,\bm x
    +
    \alpha\,\partial_{\lambda_i}\bm x ,
\end{equation}
and
\begin{equation}
\label{eq:alpha-derivative-triangle}
    \partial_{\lambda_i}\alpha
    =
    -
    \frac{
        \alpha\,
        \partial_{\lambda_i}\bm x\cdot \nabla F(\bm z)
    }{
        \bm x\cdot\nabla F(\bm z)
    } .
\end{equation}
The denominator is uniformly bounded away from zero in the following scaled
sense:
\[
    |\bm x(\bm\lambda)\cdot\nabla F(\bm z(\bm\lambda))|
    \ge c h_T ,
\]
where $c>0$ is independent of $h_T$. This follows from the uniform
transversality estimate~\eqref{eq:uniform-transversality} and from
$\nabla F=|\nabla F|\bm n$ on $\Gamma$.

Finally, we evaluate~\eqref{eq:surface-integral-triangle} by first applying the
Duffy transformation
\[
\mathcal D(\xi,\eta)
=
\bigl(\xi,(1-\xi)\eta\bigr),
\qquad
(\xi,\eta)\in[0,1]^2,
\]
whose Jacobian determinant is \(1-\xi\). Thus
\[
\int_{\widehat K}g(\bm\lambda)\,d\bm\lambda
=
\int_0^1\int_0^1
g\bigl(\xi,(1-\xi)\eta\bigr)(1-\xi)\,d\eta\,d\xi.
\]
Let \(\{\xi_i,\omega_i\}_{i=1}^{N_q}\) be the Gauss--Legendre nodes and
weights on \([0,1]\). The rule used in the computations is the tensor-product
rule
\[
\int_{\widehat K}g(\bm\lambda)\,d\bm\lambda
\approx
\sum_{i=1}^{N_q}\sum_{j=1}^{N_q}
\omega_i\omega_j(1-\xi_i)
g\bigl(\xi_i,(1-\xi_i)\xi_j\bigr).
\]
Because the Gauss--Legendre nodes lie in \((0,1)\), all weights in this
triangle rule are strictly positive. We denote the induced triangle rule by
\(Q_{\widehat K}^{N_q}\).

\subsection{Parametrization of 3D Region}
\label{sec:3dregion}

Regarding the integral over the bounded region,
$$\Omega=\{\bm{x}\in \mathbb{R}^3: F(\bm{x})\le 0\},$$
we use a tetrahedral mesh of a box \(U\) satisfying
\(\overline\Omega\subset U\).  An uncut tetrahedron contained in \(\Omega\)
is integrated by an affine pullback of a fixed reference-tetrahedron rule
that is exact for polynomials of total degree at most \(2N_q-1\),
and an exterior tetrahedron contributes zero.  The reference rule is obtained
from the Duffy map
\[
(\xi,\eta,\zeta)
\longmapsto
\bigl(\xi,(1-\xi)\eta,(1-\xi)(1-\eta)\zeta\bigr),
\qquad(\xi,\eta,\zeta)\in[0,1]^3,
\]
whose Jacobian is \((1-\xi)^2(1-\eta)\), followed by a tensor-product
Gauss--Legendre rule with \(N_q+1,N_q+1,N_q\) nodes in
\(\xi,\eta,\zeta\), respectively.  On a cut tetrahedron, the cone parametrization below
is used.  If it parametrizes the exterior portion, the desired contribution
is the whole-tetrahedron rule minus the exterior cone rule.  Thus, as in the
two-dimensional case, it suffices to describe the configuration shown in
Fig.~\ref{fig:para-region-3D}.
\begin{figure}[htbp]
    \centering
\includegraphics[width=0.5\textwidth]{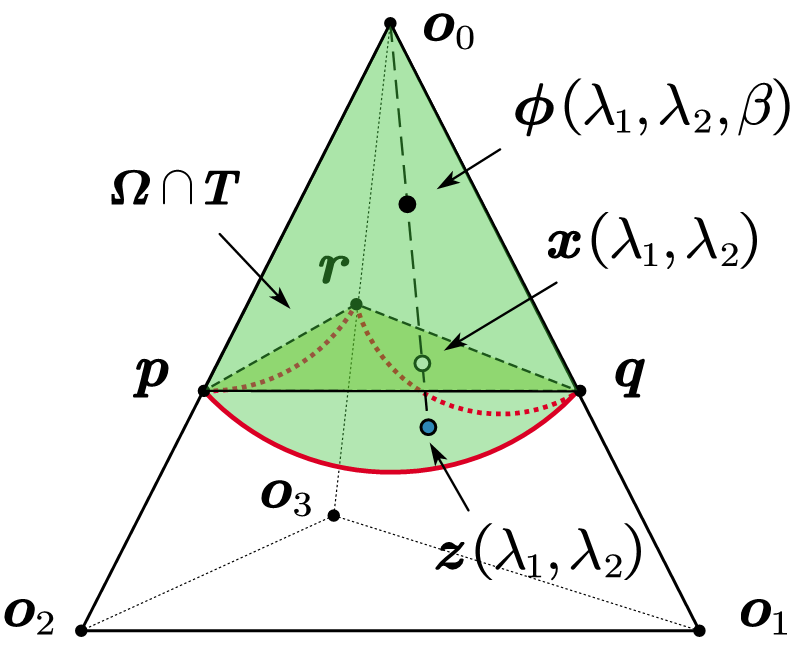}
    \caption{Parametrization of \(\Omega \cap T\) in $\mathbb{R}^3$.}
    \label{fig:para-region-3D}
\end{figure}

The parametrization is also very easy to construct. 
\begin{align}
    \bm{\phi}(\lambda_1,\lambda_2,\beta)
    =
    \beta \bm{z}(\lambda_1,\lambda_2),
    \qquad
    (\lambda_1,\lambda_2)\in\widehat K,\quad \beta\in[0,1].
\end{align}

By change of variable formula, we have
$$
\int_{\Omega\cap T} f
=
\int_{\widehat K}\int_0^1
\left|\operatorname{det}J(\bm{\phi})\right|
f(\bm{\phi}(\lambda_1,\lambda_2,\beta))
\,d\beta\,d\bm\lambda
$$
with $J(\bm{\phi})$ is also the Jacobi matrix of $\bm{\phi}$,
$$J(\bm{\phi})=\left[\beta\partial_{\lambda_1}\bm{z}(\lambda_1,\lambda_2),\beta\partial_{\lambda_2}\bm{z}(\lambda_1,\lambda_2),\bm{z}(\lambda_1,\lambda_2)\right]\in \mathbb{R}^{3\times 3}.$$
The \(\widehat K\)-integration is evaluated with the Duffy-transformed
tensor-product Gauss--Legendre rule described in
Section~\ref{sec:3dplane}, and a one-dimensional Gauss--Legendre rule is used
for \(\beta\).

\section{Regularity of the Parametrization}
\label{sec:order}
\subsection{Error over Curve}

In this section we establish uniform regularity estimates for the local
parametrizations introduced in Sections~3 and~4. Throughout the analysis, we
assume that the background meshes are shape-regular and quasi-uniform, with
the consistency and transversality constants uniform in \(h\). We also assume
that every one-dimensional nonlinear equation defining an intersection point
or a radial factor \(\alpha\) is solved by a safeguarded scalar solver to
machine precision.  The resulting roundoff-level root error is treated as
negligible in the \(h\)-asymptotic estimates, so the computed roots are
identified with the exact roots below.  Thus the estimates measure the
quadrature error above the machine-precision floor.  The key point is that the derivatives of the
parametrizations, measured on fixed reference domains, scale with the diameter
of the cut element in the expected way.

To streamline our induction proofs, we first introduce a basic algebraic lemma regarding integer arithmetic.
\begin{lemma}[Algebraic Inequality]
\label{lem:algebraic}
    For any integer $m \geq 2$ and positive integers $x_1, x_2, \ldots, x_m$, we have
    \[
    \sum_{i=1}^m \left\lfloor\frac{x_i-1}{2}\right\rfloor
    \le
    \left\lfloor\frac{\sum_{i=1}^m x_i}{2}\right\rfloor - 1 .
    \]
\end{lemma}

We now introduce the notation for the 2D curve case in the same form as in
Section \ref{sec:2dcurve}. Let
\(T=\triangle\bm o_0\bm o_1\bm o_2\) be a triangle consistent with
\(\Gamma\), assume that \(\bm o_0\) is the exceptional vertex, and let
\[
\bm p\in\overline{\bm o_0\bm o_1}\cap\Gamma,
\qquad
\bm q\in\overline{\bm o_0\bm o_2}\cap\Gamma
\]
be the unique edge-intersection points. Assume, without loss of generality,
that \(\bm{o}_0=(0,0)\). We use
\[
    \bm{x}(\lambda)=(1-\lambda)\bm{p}+\lambda\bm{q},\qquad
    \bm{z}(\lambda)=\alpha(\lambda)\bm{x}(\lambda),\qquad
    \lambda\in[0,1],
\]
where $\bm{z}(\lambda)$ is determined by
\[
    F(\bm{z}(\lambda))=F(\alpha(\lambda)\bm{x}(\lambda))=0.
\]
Here $h$ denotes the typical mesh size, and hence
\[
    |\bm{x}(\lambda)|+|\partial_\lambda\bm{x}(\lambda)|\le Ch,
    \qquad
    \partial_\lambda^j\bm{x}(\lambda)=0\quad (j\ge 2).
\]
With this parametrization, the integral over $T\cap\Gamma$ is written as
\[
\int_{T\cap \Gamma} f(\bm{x})\,ds
=
\int_0^1 f(\bm{z}(\lambda))|\partial_\lambda\bm{z}(\lambda)|\,d\lambda .
\]
Using the error estimate of the Gauss-Legendre quadrature rule with $N_q$ nodes, the local quadrature error is controlled by
\[
e_{curve,T}
=
O\left(
\sup_{\lambda\in[0,1]}
\left|
\frac{d^{2N_q}}{d\lambda^{2N_q}}
\bigl(
f(\bm{z}(\lambda))|\partial_\lambda\bm{z}(\lambda)|
\bigr)
\right|
\right).
\]

Function $f$ is assumed to be smooth enough, so we only need to bound the derivatives of the parameterization.

    Let $\bm{\gamma}(\sigma)$ be the arc-length parametrization of $\Gamma$ on $T\cap\Gamma$ such that
    \[
        \bm{z}(\lambda)=\bm{\gamma}(\sigma(\lambda)).
    \]
    Since $\bm{z}(\lambda)=\alpha(\lambda)\bm{x}(\lambda)$, we write
    \[
        \rho(\lambda)=\alpha(\lambda)-1,
    \]
    and the relation between the approximating segment $\bm{x}(\lambda)$ and the exact curve can be written as
    \begin{equation}
    \label{eq:restriction2d}
        \bm{\gamma}(\sigma(\lambda))-(1+\rho(\lambda))\bm{x}(\lambda)=0 .
    \end{equation}
    In this scale, $\bm{x}(\lambda)/h=O(1)$ and $\rho(\lambda)=O(h)$.
    We first record the nondegeneracy needed when differentiating the
    arc-length Jacobian.

    \begin{lemma}
    \label{lem:curve-speed-lower-bound}
    There are constants \(c,C>0\), independent of \(h\) and of the cut
    triangle, such that
    \[
    ch\le |\bm x(\lambda)|\le Ch,\qquad
    c\le\alpha(\lambda)\le C,\qquad
    ch\le|\partial_\lambda\bm z(\lambda)|\le Ch
    \]
    for every \(\lambda\in[0,1]\).
    \end{lemma}

    \begin{proof}
    The signed distance is \(1\)-Lipschitz.  Since
    \(\bm p\in\Gamma\cap\overline{\bm o_0\bm o_1}\), consistency gives
    \[
    |\bm p-\bm o_0|
    \ge |d(\bm o_0)|\ge c_0h,
    \qquad
    |\bm o_1-\bm p|
    \ge |d(\bm o_1)|\ge c_0h,
    \]
    and the analogous estimates hold for \(\bm q\) on
    \(\overline{\bm o_0\bm o_2}\).  Shape-regularity of \(T\) implies that
    its edge lengths are comparable to \(h\) and that the angle between the
    two edges issuing from \(\bm o_0\) is bounded away from \(0\) and
    \(\pi\).  Consequently,
    \[
    |\bm p|+|\bm q|+|\bm q-\bm p|\le Ch,\qquad
    |\det[\bm p,\bm q]|\ge ch^2,
    \]
    after translating \(\bm o_0\) to the origin.  The smallest singular
    value of the matrix \([\bm p,\bm q]\) is therefore comparable to \(h\).
    Since
    \[
    \bm x(\lambda)=(1-\lambda)\bm p+\lambda\bm q
    \]
    has nonnegative coefficient vector of Euclidean norm at least
    \(1/\sqrt2\), it follows that \(ch\le|\bm x(\lambda)|\le Ch\).

    The point \(\bm z(\lambda)\) belongs to \(T\cap\Gamma\).  Hence
    \[
    |\bm z(\lambda)|\ge |d(\bm o_0)|\ge c_0h,
    \qquad
    |\bm z(\lambda)|\le h.
    \]
    Since \(\bm z=\alpha\bm x\), these estimates yield
    \(c\le\alpha\le C\).  Moreover,
    \[
    \det[\partial_\lambda\bm z,\bm z]
    =
    \alpha^2\det[\partial_\lambda\bm x,\bm x]
    =
    -\alpha^2\det[\bm p,\bm q],
    \]
    and therefore
    \[
    |\partial_\lambda\bm z|
    \ge
    \frac{|\det[\partial_\lambda\bm z,\bm z]|}{|\bm z|}
    \ge ch.
    \]
    Finally, the differentiated level-set equation gives
    \[
    \partial_\lambda\alpha
    =
    -\frac{\alpha\,\partial_\lambda\bm x\cdot\nabla F(\bm z)}
    {\bm x\cdot\nabla F(\bm z)}.
    \]
    The numerator is \(O(h)\), while uniform transversality gives a
    denominator bounded below by \(ch\).  Thus
    \(\partial_\lambda\alpha=O(1)\), and
    \(\partial_\lambda\bm z
    =(\partial_\lambda\alpha)\bm x+\alpha\partial_\lambda\bm x=O(h)\).
    \end{proof}

    In particular, the absolute value in
    \(|\partial_\lambda\bm z|\) is applied to a vector that is uniformly
    nonzero after scaling by \(h\); all derivatives used below are therefore
    well defined.  To prove the higher-order estimates, we next need a lemma
    evaluating the inverse of the associated Jacobian matrix.

    \begin{lemma}
    \label{lem:matrix2d}
        Let
        \[
            M=\begin{pmatrix}
                \bm{\gamma}'(\sigma) & \dfrac{\bm{x}(\lambda)}{h}
            \end{pmatrix}.
        \]
        Then
        \[
            M^{-1}\partial_\lambda\bm{x}(\lambda)
            =
            \begin{pmatrix}
                O(h)\\
                O(h^2)
            \end{pmatrix},
        \]
        and $M^{-1}\bm{w}=O(|\bm{w}|)$ for every vector $\bm{w}\in\mathbb{R}^2$.
    \end{lemma}
    \begin{proof}[Proof of Lemma \ref{lem:matrix2d}]
        Since the vectors $\bm{\gamma}'(\sigma)$ and $\bm{x}(\lambda)/h$ have lengths of $O(1)$ and the consistency assumption prevents them from becoming parallel, $M^{-1}$ is uniformly bounded. It remains to prove the improved estimate for the vector $\partial_\lambda\bm{x}$.

        Let $\bm{\tau}=\partial_\lambda\bm{x}/|\partial_\lambda\bm{x}|$ whenever $\partial_\lambda\bm{x}\ne0$. The approximating segment in Section \ref{sec:2dcurve} has length $O(h)$ and its endpoints lie on the two cut edges associated with the same curve segment. Hence the standard chord-tangent estimate for a smooth curve gives that the angle between $\bm{\tau}$ and the tangent direction $\bm{\gamma}'(\sigma)$ is $O(h)$. Equivalently, if $d$ is the signed distance function to $\Gamma$ and $\bm{n}=\nabla d$ is the unit normal, then
        \[
            |\bm{n}\cdot \bm{\tau}|=O(h).
        \]
        Thus the component of $\bm{\tau}$ in the second column direction $\bm{x}/h$ is $O(h)$ after resolving $\bm{\tau}$ in the basis $\{\bm{\gamma}'(\sigma),\bm{x}/h\}$. Therefore
        \[
            M^{-1}\bm{\tau}
            =
            \begin{pmatrix}
                O(1)\\
                O(h)
            \end{pmatrix}.
        \]
        Since $|\partial_\lambda\bm{x}|=O(h)$, we obtain
        \[
            M^{-1}\partial_\lambda\bm{x}
            =
            \begin{pmatrix}
                O(h)\\
                O(h^2)
            \end{pmatrix}.
        \]
    \end{proof}
\begin{theorem}
\label{thm:2dbound}
    For any integer $n \in \mathbb{N}_+$, the parametrization
    $\bm{z}(\lambda)=\alpha(\lambda)\bm{x}(\lambda)$ satisfies
    \[
        \left|\partial_\lambda^n\bm{z}(\lambda)\right|
        \le
        C h^{\left\lfloor\frac{n}{2}\right\rfloor+1},
        \qquad \lambda\in[0,1],
    \]
    where \(C\) may depend on the smoothness and geometry of \(\Gamma\), the
    mesh shape-regularity constant, and the uniform transversality constant,
    but is independent of \(h\).
\end{theorem}
\begin{proof}
    We prove Theorem \ref{thm:2dbound} by induction. Taking the derivative of \eqref{eq:restriction2d} with respect to $\lambda$ gives
    \begin{equation}
    \label{eq:first_deriv_2d_new}
        M
        \begin{pmatrix}
            \sigma'(\lambda)\\[2pt]
            -h\rho'(\lambda)
        \end{pmatrix}
        =
        (1+\rho(\lambda))\partial_\lambda\bm{x}(\lambda).
    \end{equation}
    By Lemma \ref{lem:matrix2d}, this gives
    \[
        \sigma'(\lambda)=O(h),\qquad
        \rho'(\lambda)=O(h).
    \]
    Since $\partial_\lambda\bm{z}=\bm{\gamma}'(\sigma)\sigma'$, we have
    $\partial_\lambda\bm{z}=O(h)$, which proves the base case.

    For the induction step, we prove simultaneously that
    \[
        \partial_\lambda^n\sigma
        =
        O\!\left(h^{\left\lfloor\frac{n}{2}\right\rfloor+1}\right),
        \qquad
        \partial_\lambda^n\rho
        =
        O\!\left(h^{\left\lfloor\frac{n+1}{2}\right\rfloor}\right).
    \]
    Taking the $n$-th derivative of \eqref{eq:restriction2d} and applying Fa\`a di Bruno's formula yields
    \[
    M
    \begin{pmatrix}
        \partial_\lambda^n\sigma\\[4pt]
        -h\partial_\lambda^n\rho
    \end{pmatrix}
    =
    n(\partial_\lambda^{n-1}\rho)\partial_\lambda\bm{x}
    -
    \sum_{m=2}^n
    \sum_{\sum i_j=n}
    C_{i_1\dots i_m}
    (\partial_\lambda^{i_1}\sigma)\cdots(\partial_\lambda^{i_m}\sigma)
    \bm{\gamma}^{(m)}(\sigma).
    \]
    By the induction hypothesis, the first term on the right-hand side is
    \[
        n(\partial_\lambda^{n-1}\rho)\partial_\lambda\bm{x}
        =
        O\!\left(h^{\left\lfloor\frac{n}{2}\right\rfloor+1}\right)
    \]
    and, more precisely, it is a scalar multiple of $\partial_\lambda\bm{x}$. Therefore Lemma \ref{lem:matrix2d} gives the contribution
    \[
        \begin{pmatrix}
        O\!\left(h^{\left\lfloor\frac{n}{2}\right\rfloor+1}\right)\\[2pt]
        O\!\left(h^{\left\lfloor\frac{n}{2}\right\rfloor+2}\right)
        \end{pmatrix}
    \]
    to $(\partial_\lambda^n\sigma,-h\partial_\lambda^n\rho)^T$.

    For the summation term, applying the induction hypothesis to the derivatives of $\sigma$ and using Lemma \ref{lem:algebraic}, the product of the lower-order terms is bounded by
    \[
        O\!\left(h^{\left\lfloor\frac{n+1}{2}\right\rfloor+1}\right).
    \]
    Since $M^{-1}$ is uniformly bounded for arbitrary vectors, this term contributes
    \[
        O\!\left(h^{\left\lfloor\frac{n+1}{2}\right\rfloor+1}\right)
    \]
    to both components of $(\partial_\lambda^n\sigma,-h\partial_\lambda^n\rho)^T$. Combining the two contributions yields
    \[
        \partial_\lambda^n\sigma
        =
        O\!\left(h^{\left\lfloor\frac{n}{2}\right\rfloor+1}\right),
        \qquad
        \partial_\lambda^n\rho
        =
        O\!\left(h^{\left\lfloor\frac{n+1}{2}\right\rfloor}\right).
    \]
    Finally, applying Fa\`a di Bruno's formula once more to
    $\bm{z}(\lambda)=\bm{\gamma}(\sigma(\lambda))$ gives
    \[
        \partial_\lambda^n\bm{z}(\lambda)
        =
        O\!\left(h^{\left\lfloor\frac{n}{2}\right\rfloor+1}\right).
    \]
    This completes the induction.
\end{proof}

By Theorem \ref{thm:2dbound}, the highest derivative term required for the local quadrature error satisfies
\[
\partial_\lambda^{2N_q+1}\bm{z}(\lambda)=O(h^{N_q+1}).
\]
Therefore,
\[
\frac{d^{2N_q}}{d\lambda^{2N_q}}
\bigl(f(\bm{z}(\lambda))|\partial_\lambda\bm{z}(\lambda)|\bigr)
=
O(h^{N_q+1}).
\]
Hence the local error over one cut triangle is $O(h^{N_q+1})$. Since a smooth curve intersects $O(h^{-1})$ triangles, the global quadrature error for the curve integral is
\[
    e_{curve}=O(h^{N_q}).
\]

\subsection{Error over Surfaces}

We now analyze the surface parametrization introduced in
Section~\ref{sec:3dplane}. Let $T$ be a shape-regular tetrahedron consistent
with $\Gamma$, and let $h=h_T$. After translation, assume that the apex vertex
used in the ray parametrization is $\bm{o}_0=0$.

Let
\[
    \widehat K
    =
    \left\{
    \bm\lambda=(\lambda_1,\lambda_2)\in\mathbb{R}^2:
    \lambda_1\ge0,\ \lambda_2\ge0,\ \lambda_1+\lambda_2\le1
    \right\}.
\]
The chord triangle is parametrized by the affine map
\begin{equation}
\label{eq:x-affine-triangle-52}
    \bm{x}(\bm\lambda)
    =
    (1-\lambda_1-\lambda_2)\bm p
    +\lambda_1\bm q
    +\lambda_2\bm r ,
    \qquad
    \bm\lambda\in\widehat K .
\end{equation}
The exact surface patch is parametrized by
\[
    \bm z(\bm\lambda)
    =
    \alpha(\bm\lambda)\bm x(\bm\lambda),
    \qquad
    F(\bm z(\bm\lambda))=0 .
\]
Because $\bm x$ is affine, we have
\begin{equation}
\label{eq:x-affine-derivatives}
    |\bm x|+|\partial_{\lambda_1}\bm x|
    +|\partial_{\lambda_2}\bm x|
    \le Ch,
    \qquad
    \partial_{\bm\lambda}^{\nu}\bm x=0
    \quad\text{for all }|\nu|\ge2 .
\end{equation}

We use the Duffy-transformed tensor-product Gauss--Legendre rule described in
Section~\ref{sec:3dplane}. For
\[
\widetilde g(\xi,\eta)
=
(1-\xi)g\bigl(\xi,(1-\xi)\eta\bigr),
\]
the standard tensor-product Gauss--Legendre remainder gives
\begin{equation}
\label{eq:triangle-quadrature-error-assumption}
    \left|
    \int_{\widehat K} g(\bm\lambda)\,d\bm\lambda
    -
    Q_{\widehat K}^{N_q}(g)
    \right|
    \le
    C_{N_q}
    \left(
    \|\partial_\xi^{2N_q}\widetilde g\|_{L^\infty([0,1]^2)}
    +
    \|\partial_\eta^{2N_q}\widetilde g\|_{L^\infty([0,1]^2)}
    \right).
\end{equation}
The constant \(C_{N_q}\) depends only on the one-dimensional reference rule,
not on \(h\).

Let $\bm\gamma(\sigma_1,\sigma_2)$ be a smooth local parametrization of
$\Gamma$ on the patch $T\cap\Gamma$, with uniformly bounded derivatives, such
that
\[
    \bm z(\bm\lambda)
    =
    \bm\gamma(\bm\sigma(\bm\lambda)),
    \qquad
    \bm\sigma=(\sigma_1,\sigma_2).
\]
Set
\[
    \rho(\bm\lambda)=\alpha(\bm\lambda)-1 .
\]
Then
\begin{equation}
\label{eq:restriction3d-triangle}
    \bm\gamma(\bm\sigma(\bm\lambda))
    -
    (1+\rho(\bm\lambda))\bm x(\bm\lambda)
    =
    0 .
\end{equation}
The usual chord-surface estimate for a smooth surface gives
\[
    \rho(\bm\lambda)=O(h),
    \qquad
    \bm\lambda\in\widehat K .
\]

We first record the basic matrix estimate. Define
\[
    M
    =
    \begin{pmatrix}
        \bm\gamma_{\sigma_1} &
        \bm\gamma_{\sigma_2} &
        \dfrac{\bm x(\bm\lambda)}{h}
    \end{pmatrix}.
\]

\begin{lemma}
\label{lem:matrix3d}
For $i=1,2$,
\begin{equation}
\label{eq:matrix-improved-estimate-3d}
    M^{-1}\partial_{\lambda_i}\bm x
    =
    \begin{pmatrix}
        O(h)\\
        O(h)\\
        O(h^2)
    \end{pmatrix}.
\end{equation}
Moreover,
\[
    M^{-1}\bm w=O(|\bm w|)
    \qquad
    \text{for every }\bm w\in\mathbb{R}^3 .
\]
\end{lemma}

\begin{proof}[Proof of Lemma~\ref{lem:matrix3d}]
The vectors $\bm\gamma_{\sigma_1}$ and $\bm\gamma_{\sigma_2}$ span the tangent
plane of $\Gamma$. By the uniform transversality estimate
\eqref{eq:uniform-transversality}, the vector $\bm x/h$ is uniformly
transversal to this tangent plane. Hence $M$ is uniformly invertible and
$M^{-1}\bm w=O(|\bm w|)$.

It remains to prove the improved estimate
\eqref{eq:matrix-improved-estimate-3d}. Since $\bm p,\bm q,\bm r$ lie on the
smooth surface $\Gamma$ and have mutual distance $O(h)$, the chord-plane
estimate gives
\[
    \bm n(\bm z)\cdot(\bm q-\bm p)=O(h^2),
    \qquad
    \bm n(\bm z)\cdot(\bm r-\bm p)=O(h^2),
\]
uniformly for $\bm z\in T\cap\Gamma$. Therefore the normal components of
\[
    \partial_{\lambda_1}\bm x=\bm q-\bm p,
    \qquad
    \partial_{\lambda_2}\bm x=\bm r-\bm p
\]
are $O(h^2)$, while their tangential components are $O(h)$. Resolving
$\partial_{\lambda_i}\bm x$ in the basis
\[
    \left\{
    \bm\gamma_{\sigma_1},
    \bm\gamma_{\sigma_2},
    \bm x/h
    \right\}
\]
therefore gives
\[
    M^{-1}\partial_{\lambda_i}\bm x
    =
    \begin{pmatrix}
        O(h)\\
        O(h)\\
        O(h^2)
    \end{pmatrix},
    \qquad i=1,2.
\]
\end{proof}
\begin{theorem}
\label{thm:3dbound}
For any multi-index $\nu=(\nu_1,\nu_2)$ with $|\nu|=n\ge1$, the parametrization
\[
    \bm z(\bm\lambda)=\alpha(\bm\lambda)\bm x(\bm\lambda)
\]
satisfies
\begin{equation}
\label{eq:z-derivative-bound-3d}
    \left|
    \partial_{\bm\lambda}^{\nu}\bm z(\bm\lambda)
    \right|
    \le
    C h^{\left\lfloor n/2\right\rfloor+1},
    \qquad
    \bm\lambda\in\widehat K ,
\end{equation}
where \(C\) may depend on the smoothness and geometry of \(\Gamma\), the mesh
shape-regularity constant, and the uniform transversality constant, but is
independent of \(h\).
\end{theorem}

\begin{proof}
We prove the theorem by induction. We shall show that for every multi-index
$\nu$ with $|\nu|=n\ge1$,
\begin{equation}
\label{eq:sigma-rho-induction-3d}
    \partial_{\bm\lambda}^{\nu}\sigma_1,\,
    \partial_{\bm\lambda}^{\nu}\sigma_2
    =
    O\!\left(h^{\left\lfloor n/2\right\rfloor+1}\right),
    \qquad
    \partial_{\bm\lambda}^{\nu}\rho
    =
    O\!\left(h^{\left\lfloor (n+1)/2\right\rfloor}\right).
\end{equation}

For $n=1$, applying $\partial_{\lambda_i}$ to
\eqref{eq:restriction3d-triangle} gives
\[
    M
    \begin{pmatrix}
        \partial_{\lambda_i}\sigma_1\\[2pt]
        \partial_{\lambda_i}\sigma_2\\[2pt]
        -h\partial_{\lambda_i}\rho
    \end{pmatrix}
    =
    (1+\rho)\partial_{\lambda_i}\bm x,
    \qquad i=1,2 .
\]
Lemma~\ref{lem:matrix3d} implies
\[
    \partial_{\lambda_i}\sigma_1,\,
    \partial_{\lambda_i}\sigma_2
    =
    O(h),
    \qquad
    \partial_{\lambda_i}\rho=O(h).
\]
Thus the induction starts.

Now assume that \eqref{eq:sigma-rho-induction-3d} holds for all multi-indices
of order less than $n$, and let $|\nu|=n\ge2$. Applying
$\partial_{\bm\lambda}^{\nu}$ to \eqref{eq:restriction3d-triangle} yields
\begin{equation}
\label{eq:induction-equation-3d}
    M
    \begin{pmatrix}
        \partial_{\bm\lambda}^{\nu}\sigma_1\\[3pt]
        \partial_{\bm\lambda}^{\nu}\sigma_2\\[3pt]
        -h\partial_{\bm\lambda}^{\nu}\rho
    \end{pmatrix}
    =
    \nu_1
    \bigl(\partial_{\bm\lambda}^{\nu-e_1}\rho\bigr)
    \partial_{\lambda_1}\bm x
    +
    \nu_2
    \bigl(\partial_{\bm\lambda}^{\nu-e_2}\rho\bigr)
    \partial_{\lambda_2}\bm x
    -
    \bm S_\nu ,
\end{equation}
where $e_1=(1,0)$, $e_2=(0,1)$, and terms with negative multi-index
components are omitted. The term $\bm S_\nu$ contains only lower-order
nonlinear chain-rule terms coming from differentiating
$\bm\gamma(\bm\sigma)$. It is a finite sum of terms of the form
\[
    C\,
    \partial_{\sigma_1}^{a}\partial_{\sigma_2}^{b}
    \bm\gamma(\bm\sigma)
    \prod_{j=1}^{m}
    \partial_{\bm\lambda}^{\beta_j}\sigma_{c_j},
    \qquad
    m\ge2,\quad
    \sum_{j=1}^{m}\beta_j=\nu,\quad
    1\le |\beta_j|<n .
\]
Using the induction hypothesis and the elementary parity inequality
\[
    \sum_{j=1}^{m}
    \left(
    \left\lfloor\frac{|\beta_j|}{2}\right\rfloor+1
    \right)
    \ge
    \left\lfloor\frac{n+1}{2}\right\rfloor+1,
\]
we obtain
\begin{equation}
\label{eq:Snu-bound-3d}
    \bm S_\nu
    =
    O\!\left(
    h^{\left\lfloor (n+1)/2\right\rfloor+1}
    \right).
\end{equation}

For the first two terms on the right-hand side of
\eqref{eq:induction-equation-3d}, the induction hypothesis gives
\[
    \partial_{\bm\lambda}^{\nu-e_i}\rho
    =
    O\!\left(h^{\left\lfloor n/2\right\rfloor}\right),
    \qquad i=1,2.
\]
Since these terms are scalar multiples of
$\partial_{\lambda_i}\bm x$, Lemma~\ref{lem:matrix3d} gives the contribution
\[
    \begin{pmatrix}
        O\!\left(h^{\left\lfloor n/2\right\rfloor+1}\right)\\[2pt]
        O\!\left(h^{\left\lfloor n/2\right\rfloor+1}\right)\\[2pt]
        O\!\left(h^{\left\lfloor n/2\right\rfloor+2}\right)
    \end{pmatrix}.
\]
For $\bm S_\nu$, the uniform boundedness of $M^{-1}$ and
\eqref{eq:Snu-bound-3d} imply the contribution
\[
    O\!\left(
    h^{\left\lfloor (n+1)/2\right\rfloor+1}
    \right)
\]
to all three components. Combining the estimates gives
\[
    \partial_{\bm\lambda}^{\nu}\sigma_1,\,
    \partial_{\bm\lambda}^{\nu}\sigma_2
    =
    O\!\left(h^{\left\lfloor n/2\right\rfloor+1}\right),
    \qquad
    \partial_{\bm\lambda}^{\nu}\rho
    =
    O\!\left(h^{\left\lfloor (n+1)/2\right\rfloor}\right).
\]
This proves \eqref{eq:sigma-rho-induction-3d}.

Finally, applying the multivariate Fa\`a di Bruno formula to
\[
    \bm z(\bm\lambda)=\bm\gamma(\bm\sigma(\bm\lambda))
\]
and using the just-proved estimates for $\bm\sigma$ gives
\[
    \partial_{\bm\lambda}^{\nu}\bm z(\bm\lambda)
    =
    O\!\left(h^{\left\lfloor n/2\right\rfloor+1}\right).
\]
The theorem follows.
\end{proof}

We next estimate the surface Jacobian. Define
\[
    A(\bm\lambda)
    =
    \left|
        \partial_{\lambda_1}\bm z(\bm\lambda)
        \times
        \partial_{\lambda_2}\bm z(\bm\lambda)
    \right|.
\]

\begin{lemma}
\label{lem:surface-jacobian-bound}
For any multi-index $\nu$ with $|\nu|=n\ge0$,
\begin{equation}
\label{eq:surface-jacobian-derivative-bound}
    \partial_{\bm\lambda}^{\nu}A(\bm\lambda)
    =
    O\!\left(h^{\left\lfloor (n+1)/2\right\rfloor+2}\right).
\end{equation}
\end{lemma}

\begin{proof}
Let
\[
    \bm B(\bm\lambda)
    =
    \partial_{\lambda_1}\bm z(\bm\lambda)
    \times
    \partial_{\lambda_2}\bm z(\bm\lambda).
\]
    We first justify the required lower bound.  With the apex translated to
    the origin, write
    \[
    \bm p=t_1\bm o_1,\qquad
    \bm q=t_2\bm o_2,\qquad
    \bm r=t_3\bm o_3 .
    \]
    Vertex--surface separation, the \(1\)-Lipschitz property of signed
    distance, and shape-regularity imply \(c\le t_i\le1\).  Therefore
    \[
    |\det[\bm p,\bm q,\bm r]|
    =
    t_1t_2t_3|\det[\bm o_1,\bm o_2,\bm o_3]|
    \ge ch^3 .
    \]
    In particular, the smallest singular value of
    \([\bm p,\bm q,\bm r]\) is comparable to \(h\).  Since
    \(\bm x=(1-\lambda_1-\lambda_2)\bm p+\lambda_1\bm q+\lambda_2\bm r\),
    this gives \(ch\le|\bm x|\le Ch\).  The same signed-distance argument
    gives \(ch\le|\bm z|\le h\), so
    \(c\le\alpha=|\bm z|/|\bm x|\le C\).

    Now use the scalar-triple-product identity
    \[
    \bm z\cdot
    \left(
    \partial_{\lambda_1}\bm z
    \times
    \partial_{\lambda_2}\bm z
    \right)
    =
    \alpha^3
    \bm x\cdot
    \left(
    \partial_{\lambda_1}\bm x
    \times
    \partial_{\lambda_2}\bm x
    \right)
    =
    \alpha^3\det[\bm p,\bm q,\bm r].
    \]
    Since \(|\bm z|\le h\), it follows that
    \[
    |\bm B(\bm\lambda)|
    \ge
    \frac{
    |\bm z\cdot\bm B(\bm\lambda)|
    }{|\bm z|}
    \ge ch^2 .
    \]
By the product rule,
\[
    \partial_{\bm\lambda}^{\nu}\bm B
    =
    \sum_{\beta+\eta=\nu}
    C_{\beta,\eta}
    \left(
        \partial_{\bm\lambda}^{\beta+e_1}\bm z
        \times
        \partial_{\bm\lambda}^{\eta+e_2}\bm z
    \right).
\]
Using Theorem~\ref{thm:3dbound}, each term is bounded by
\[
    O\!\left(
    h^{\left\lfloor (|\beta|+1)/2\right\rfloor+1}
    h^{\left\lfloor (|\eta|+1)/2\right\rfloor+1}
    \right)
    =
    O\!\left(
    h^{\left\lfloor (n+1)/2\right\rfloor+2}
    \right).
\]
Thus
\[
    \partial_{\bm\lambda}^{\nu}\bm B
    =
    O\!\left(h^{\left\lfloor (n+1)/2\right\rfloor+2}\right).
\]
Writing $\bm B=h^2\widetilde{\bm B}$, the vector
$\widetilde{\bm B}$ is uniformly bounded away from zero. Since
$\bm v\mapsto |\bm v|$ is smooth away from $\bm v=0$, repeated use of the
chain rule, together with the same elementary parity estimate for products of
derivatives of \(\widetilde{\bm B}\), gives the same scaling for
\(A=|\bm B|\):
\[
    \partial_{\bm\lambda}^{\nu}A
    =
    O\!\left(h^{\left\lfloor (n+1)/2\right\rfloor+2}\right).
\]
\end{proof}

Now set
\[
    G(\bm\lambda)
    =
    f(\bm z(\bm\lambda))A(\bm\lambda).
\]
The smoothness of $f$, Theorem~\ref{thm:3dbound}, and
Lemma~\ref{lem:surface-jacobian-bound} imply
\[
    \partial_{\bm\lambda}^{\nu}G(\bm\lambda)
    =
    O\!\left(h^{\left\lfloor (|\nu|+1)/2\right\rfloor+2}\right).
\]
In particular, derivatives of orders \(2N_q-1\) and \(2N_q\) both satisfy
\[
    \partial_{\bm\lambda}^{\nu}G(\bm\lambda)
    =
    O(h^{N_q+2}).
\]
For the Duffy pullback
\[
\widetilde G(\xi,\eta)
=(1-\xi)G\bigl(\xi,(1-\xi)\eta\bigr),
\]
the chain rule therefore gives
\[
\partial_\xi^{2N_q}\widetilde G
=O(h^{N_q+2}),
\qquad
\partial_\eta^{2N_q}\widetilde G
=O(h^{N_q+2}).
\]
Using \eqref{eq:triangle-quadrature-error-assumption}, the local quadrature
error over one cut tetrahedron satisfies
\[
    e_{\mathrm{surface},T}
    =
    O(h^{N_q+2}).
\]
Since a smooth surface intersects $O(h^{-2})$ tetrahedra, the global surface
quadrature error is
\[
    e_{\mathrm{surface}}
    =
    O(h^{N_q}).
\]

\subsection{Error over Regions}
\label{sec:regionorder}

We now analyze the region integral methods. The proof uses the boundary
parametrization estimates from Sections~\ref{sec:2dcurve} and
\ref{sec:3dplane}. The main point is that the additional radial variable
$\beta$ does not reduce the convergence order, because the Gauss--Legendre rule
in $\beta$ integrates the polynomial radial factors exactly up to high degree,
and all remaining $\beta$-derivatives of $f(\beta\bm z)$ bring additional powers
of $h$.

We first account for elements that are not cut by \(\Gamma\), as well as the
whole-element term used in a complement subtraction.  Let
\(Q_T^{N_q}\) be the affine pullback of a fixed positive simplex rule that is
exact for polynomials of total degree at most \(2N_q-1\), as specified in
Sections~\ref{sec:2dregion} and~\ref{sec:3dregion}.  The standard quadrature estimate on a shape-regular simplex in dimension
\(d\) gives
\begin{equation}
\left|
\int_T f-Q_T^{N_q}(f)
\right|
\le
C h_T^{2N_q+d}
|f|_{W^{2N_q,\infty}(T)} .
\label{eq:whole-simplex-error}
\end{equation}
There are \(O(h^{-d})\) whole elements, so their accumulated error is
\(O(h^{2N_q})\), and hence is \(O(h^{N_q})\) for \(N_q\ge1\).  In a
complement configuration, the numerical contribution is the difference
between \(Q_T^{N_q}(f)\) and the mapped cone quadrature.  The triangle
inequality shows that its error is bounded by the sum of the whole-element
error \eqref{eq:whole-simplex-error} and the corresponding cone error analyzed
below.

\subsubsection*{The two-dimensional region case}

Recall the parametrization in Section~\ref{sec:2dregion}:
\[
    \bm\phi(\lambda,\beta)
    =
    \beta\bm z(\lambda),
    \qquad
    (\lambda,\beta)\in[0,1]^2 .
\]
The Jacobian matrix is
\[
    J(\bm\phi)
    =
    \left[
        \beta\partial_\lambda\bm z(\lambda),
        \bm z(\lambda)
    \right].
\]
We write
\[
    |\det J(\bm\phi)|
    =
    \beta D_2(\lambda),
    \qquad
    D_2(\lambda)
    =
    \left|
    \det
    \left[
        \partial_\lambda\bm z(\lambda),
        \bm z(\lambda)
    \right]
    \right|.
\]
The use of the absolute value corresponds to the usual change-of-variables
formula. If the orientation is fixed in advance, the absolute value may be
removed.

\begin{lemma}
\label{lem:Db_bound}
For any integer $n\ge0$,
\[
    \partial_\lambda^n D_2(\lambda)
    =
    O\!\left(
    h^{\left\lfloor n/2\right\rfloor+2}
    \right).
\]
\end{lemma}

\begin{proof}
Let
\[
    \widetilde D_2(\lambda)
    =
    \det
    \left[
        \partial_\lambda\bm z(\lambda),
        \bm z(\lambda)
    \right].
\]
By the product rule,
\[
    \partial_\lambda^n \widetilde D_2(\lambda)
    =
    \sum_{k=0}^{n}
    \binom{n}{k}
    \det
    \left[
        \partial_\lambda^{k+1}\bm z(\lambda),
        \partial_\lambda^{n-k}\bm z(\lambda)
    \right].
\]
Using Theorem~\ref{thm:2dbound}, each term is bounded by
\[
    O\!\left(
    h^{\left\lfloor (k+1)/2\right\rfloor+1}
    h^{\left\lfloor (n-k)/2\right\rfloor+1}
    \right)
    =
    O\!\left(
    h^{\left\lfloor n/2\right\rfloor+2}
    \right).
\]
Moreover, the transversality condition gives
$|\widetilde D_2(\lambda)|\ge c h^2$. Hence
$D_2=|\widetilde D_2|$ is smooth with respect to $\lambda$ on the patch, and
the same estimate holds for $D_2$.
\end{proof}

Define
\[
    H_2(\lambda)
    =
    D_2(\lambda)
    \int_0^1
    \beta f(\beta\bm z(\lambda))\,d\beta .
\]
For $n\ge0$, the smoothness of $f$, Theorem~\ref{thm:2dbound}, and
Lemma~\ref{lem:Db_bound} imply
\[
    \partial_\lambda^n H_2(\lambda)
    =
    O\!\left(
    h^{\left\lfloor n/2\right\rfloor+2}
    \right).
\]
Therefore the $N_q$-point Gauss--Legendre rule in the $\lambda$ variable gives
the local error
\[
    O(h^{N_q+2}).
\]

It remains to check that the quadrature in the radial variable $\beta$ is not
worse. For fixed $\lambda$, consider
\[
    g_\lambda(\beta)=\beta f(\beta\bm z(\lambda)).
\]
Since the Gauss--Legendre rule with $N_q$ nodes is exact for polynomials of
degree at most $2N_q-1$, the standard one-dimensional remainder gives
\[
    \left|
    \int_0^1 g_\lambda(\beta)\,d\beta
    -
    Q_\beta^{N_q}(g_\lambda)
    \right|
    \le
    C
    \left\|
    \partial_\beta^{2N_q}g_\lambda
    \right\|_{L^\infty(0,1)} .
\]
Because each derivative of $f(\beta\bm z)$ with respect to $\beta$ produces a
factor $\bm z=O(h)$, and because the factor $\beta$ is a polynomial of degree
one, we have
\[
    \partial_\beta^{2N_q}g_\lambda
    =
    O(h^{2N_q-1}).
\]
Multiplying by $D_2(\lambda)=O(h^2)$ gives a radial quadrature contribution
\[
    O(h^{2N_q+1}),
\]
which is at least $O(h^{N_q+2})$ for $N_q\ge1$.

Consequently, the local quadrature error on one cut triangle is
\[
    e_{\mathrm{local},2D}
    =
    O(h^{N_q+2}).
\]
Because \(\Gamma\) is a smooth compact curve and the mesh is quasi-uniform,
only \(O(h^{-1})\) triangles are cut.  Thus the accumulated mapped-cone error
is \(O(h^{N_q+1})\).  Adding the \(O(h^{2N_q})\) whole-element error from
\eqref{eq:whole-simplex-error}, including the whole-element terms in
complement configurations, gives
\[
    E_{2D}
    =
    O(h^{N_q+1})+O(h^{2N_q})
    =
    O(h^{N_q}),
    \qquad N_q\ge1.
\]

\subsubsection*{The three-dimensional region case}

We now consider the three-dimensional region integral. With the triangular
surface parametrization introduced in Section~\ref{sec:3dplane}, the reference
domain is
\[
    \widehat K\times[0,1],
\]
where
\[
    \widehat K
    =
    \left\{
    (\lambda_1,\lambda_2):
    \lambda_1\ge0,\ \lambda_2\ge0,\ \lambda_1+\lambda_2\le1
    \right\}.
\]
The region parametrization is
\[
    \bm\phi(\lambda_1,\lambda_2,\beta)
    =
    \beta\bm z(\lambda_1,\lambda_2),
    \qquad
    (\lambda_1,\lambda_2)\in\widehat K,\quad
    0\le\beta\le1 .
\]
The Jacobian matrix is
\[
    J(\bm\phi)
    =
    \left[
        \beta\partial_{\lambda_1}\bm z,
        \beta\partial_{\lambda_2}\bm z,
        \bm z
    \right].
\]
Hence
\[
    |\det J(\bm\phi)|
    =
    \beta^2 D_3(\bm\lambda),
\]
where
\[
    D_3(\bm\lambda)
    =
    \left|
    \bm z(\bm\lambda)
    \cdot
    \left(
        \partial_{\lambda_1}\bm z(\bm\lambda)
        \times
        \partial_{\lambda_2}\bm z(\bm\lambda)
    \right)
    \right|.
\]

\begin{lemma}
\label{lem:Dlambda3d_bound}
For any multi-index $\nu$ with $|\nu|=n\ge0$,
\[
    \partial_{\bm\lambda}^{\nu}D_3(\bm\lambda)
    =
    O\!\left(
    h^{\left\lfloor (n+1)/2\right\rfloor+3}
    \right).
\]
\end{lemma}

\begin{proof}
Using \(\bm z=\alpha\bm x\) and the fact that \(\bm x\) is affine, the scalar
triple product simplifies to
\[
\bm z\cdot
\left(
\partial_{\lambda_1}\bm z
\times
\partial_{\lambda_2}\bm z
\right)
=
\alpha^3\,
\bm x\cdot
\left(
\partial_{\lambda_1}\bm x
\times
\partial_{\lambda_2}\bm x
\right).
\]
The second factor is constant on \(\widehat K\), has magnitude comparable to
\(h^3\), and has a fixed sign. Consequently,
\[
D_3(\bm\lambda)
=
\alpha(\bm\lambda)^3
\left|
\bm x\cdot
\left(
\partial_{\lambda_1}\bm x
\times
\partial_{\lambda_2}\bm x
\right)
\right|.
\]
The derivative estimates for
\(\alpha=1+\rho\) in \eqref{eq:sigma-rho-induction-3d}, followed by the product
rule, give the stated bound.
\end{proof}

Define
\[
    H_3(\bm\lambda)
    =
    D_3(\bm\lambda)
    \int_0^1
    \beta^2 f(\beta\bm z(\bm\lambda))\,d\beta .
\]
By Lemma~\ref{lem:Dlambda3d_bound}, Theorem~\ref{thm:3dbound}, and the
smoothness of $f$,
\[
    \partial_{\bm\lambda}^{\nu}H_3(\bm\lambda)
    =
    O\!\left(
    h^{\left\lfloor (|\nu|+1)/2\right\rfloor+3}
    \right).
\]
Thus derivatives of orders \(2N_q-1\) and \(2N_q\) are both
\(O(h^{N_q+3})\). For the Duffy pullback
\[
\widetilde H_3(\xi,\eta)
=(1-\xi)H_3\bigl(\xi,(1-\xi)\eta\bigr),
\]
the chain rule gives
\[
\partial_\xi^{2N_q}\widetilde H_3
=O(h^{N_q+3}),
\qquad
\partial_\eta^{2N_q}\widetilde H_3
=O(h^{N_q+3}).
\]
Using \eqref{eq:triangle-quadrature-error-assumption}, we obtain the local error
\[
    O(h^{N_q+3})
\]
from the $\bm\lambda$ quadrature.

For the radial direction, fix $\bm\lambda$ and set
\[
    g_{\bm\lambda}(\beta)
    =
    \beta^2 f(\beta\bm z(\bm\lambda)).
\]
For $N_q\ge2$, the $N_q$-point Gauss--Legendre rule is exact for polynomials up
to degree $2N_q-1$. Since the factor $\beta^2$ is polynomial and every
remaining derivative of $f(\beta\bm z)$ with respect to $\beta$ produces a
factor $\bm z=O(h)$, we have
\[
    \partial_\beta^{2N_q}g_{\bm\lambda}
    =
    O(h^{2N_q-2}).
\]
Multiplying by $D_3(\bm\lambda)=O(h^3)$ gives the radial quadrature
contribution
\[
    O(h^{2N_q+1}),
\]
which is $O(h^{N_q+3})$ for $N_q\ge2$.

For \(N_q=1\), the one-point rule is not exact for the factor \(\beta^2\).
Nevertheless, \(g_{\bm\lambda}\) is uniformly bounded and
\(D_3=O(h^3)\), so the radial error on one cut tetrahedron is \(O(h^3)\).
The tangential error is \(O(h^4)\) and is therefore smaller.  Consequently,
the local quadrature error on one cut tetrahedron is
\[
    e_{\mathrm{local},3D}
    =
    \begin{cases}
    O(h^3),&N_q=1,\\
    O(h^{N_q+3}),&N_q\ge2.
    \end{cases}
\]
Because a smooth compact surface cuts only \(O(h^{-2})\) tetrahedra, the
accumulated cone error is \(O(h)\) for \(N_q=1\) and
\(O(h^{N_q+1})\) for \(N_q\ge2\).  The whole-element contribution from
\eqref{eq:whole-simplex-error} is \(O(h^{2N_q})\).  Complement
configurations introduce the sum, rather than a new type, of these two
errors.  Hence
\[
    E_{3D}
    =
    O(h^{N_q}),
    \qquad N_q\ge1.
\]

Combining the two-dimensional and three-dimensional arguments, the region
quadrature preserves the same global order as the corresponding boundary
quadrature:
\[
    E_{\Omega}
    =
    O(h^{N_q}).
\]

\begin{theorem}[Global quadrature error]
\label{thm:global-quadrature-error}
Let \(d\in\{2,3\}\), let \(\Gamma\) and \(f\) satisfy the smoothness
assumptions stated in the Introduction, and let the adjusted meshes be
shape-regular, quasi-uniform, and uniformly consistent with \(\Gamma\).
Assume that the scalar intersection equations are solved to machine precision.
Denote by \(Q_{\Gamma,h}^{N_q}(f)\) and \(Q_{\Omega,h}^{N_q}(f)\) the assembled
boundary and region rules described above.  Then there is a constant \(C\),
independent of \(h\), such that, above the machine-precision floor,
\[
\left|
\int_\Gamma f\,dS-Q_{\Gamma,h}^{N_q}(f)
\right|
\le Ch^{N_q},
\qquad
\left|
\int_\Omega f\,dx-Q_{\Omega,h}^{N_q}(f)
\right|
\le Ch^{N_q}.
\]
\end{theorem}

\begin{proof}
For \(d=2\), Theorem~\ref{thm:2dbound} and
Lemma~\ref{lem:curve-speed-lower-bound} give a local boundary error
\(O(h^{N_q+1})\); summing over the \(O(h^{-1})\) cut triangles yields
\(O(h^{N_q})\).  For \(d=3\), Theorem~\ref{thm:3dbound} and
Lemma~\ref{lem:surface-jacobian-bound} give a local boundary error
\(O(h^{N_q+2})\); summing over the \(O(h^{-2})\) cut tetrahedra again yields
\(O(h^{N_q})\).  The region estimates are precisely those proved in
Section~\ref{sec:regionorder}, including the \(N_q=1\) radial case, all uncut
elements, and complement subtractions.
\end{proof}

\section{Numerical Experiments}
\label{sec:surfacetest}
We present numerical experiments for curve and surface integrals over implicitly
defined geometries. In all tests, the proposed quadrature rule is applied on a
background simplicial mesh, and the error is measured against either an analytic
value or a high-accuracy reference value. The parameter \(q\) denotes the number of
Gauss--Legendre nodes in each parameter direction.

\textbf{Example 1: Integration over quartic squircle.}
We first consider a smooth non-conic closed curve.
Let \(U\subset\mathbb R^2\) be a computational box containing the curve, and
let
\[
    \Gamma=\{(x,y)\in U:\ F(x,y)=0\}, \quad  F(x,y)
    =
    \left(\frac{x}{A}\right)^4
    +
    \left(\frac{y}{B}\right)^4
    +
    \alpha
    \left(\frac{x}{A}\right)^2
    \left(\frac{y}{B}\right)^2
    -1 
\]
with $A=0.80, \; B=0.62,\; \alpha=0.22.$

The integral we compute is
\[
    I=\int_{\Gamma} f(x,y)\,ds, \quad f(x,y)=\exp(0.20x-0.15y)+0.50x^2+y^2 .
\]
A high-accuracy reference computation gives
   $I_{\rm ref}\approx 6.954045469673768.$
Uniform background grids with
$
    n=8,16,24,32
$
are used, together with Gauss--Legendre orders
$
    q=2,4,6,8 .
$

\begin{figure}[!htbp]
  \centering
 
      \includegraphics[width=0.45\linewidth]{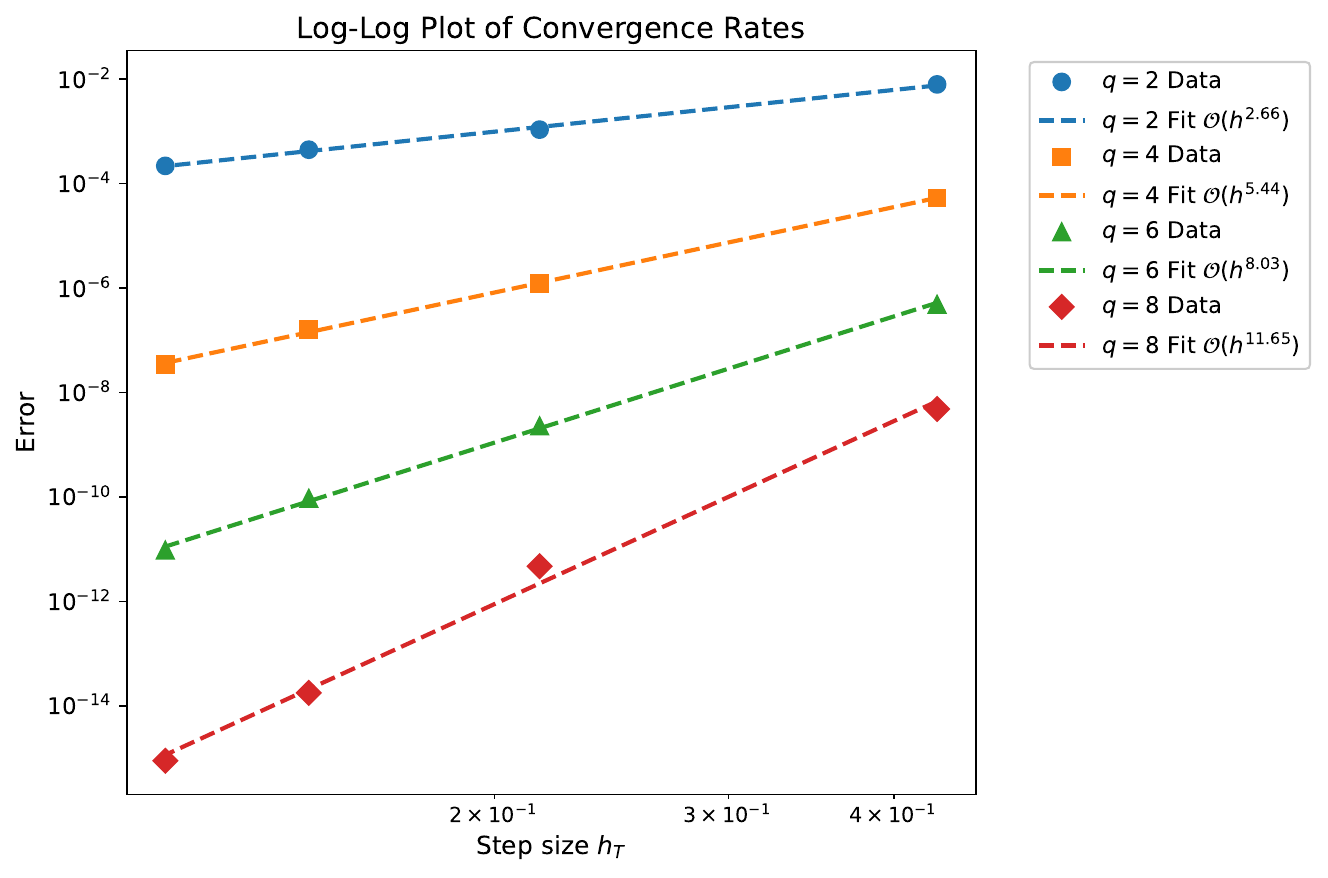} 
           \includegraphics[width=0.45\linewidth]{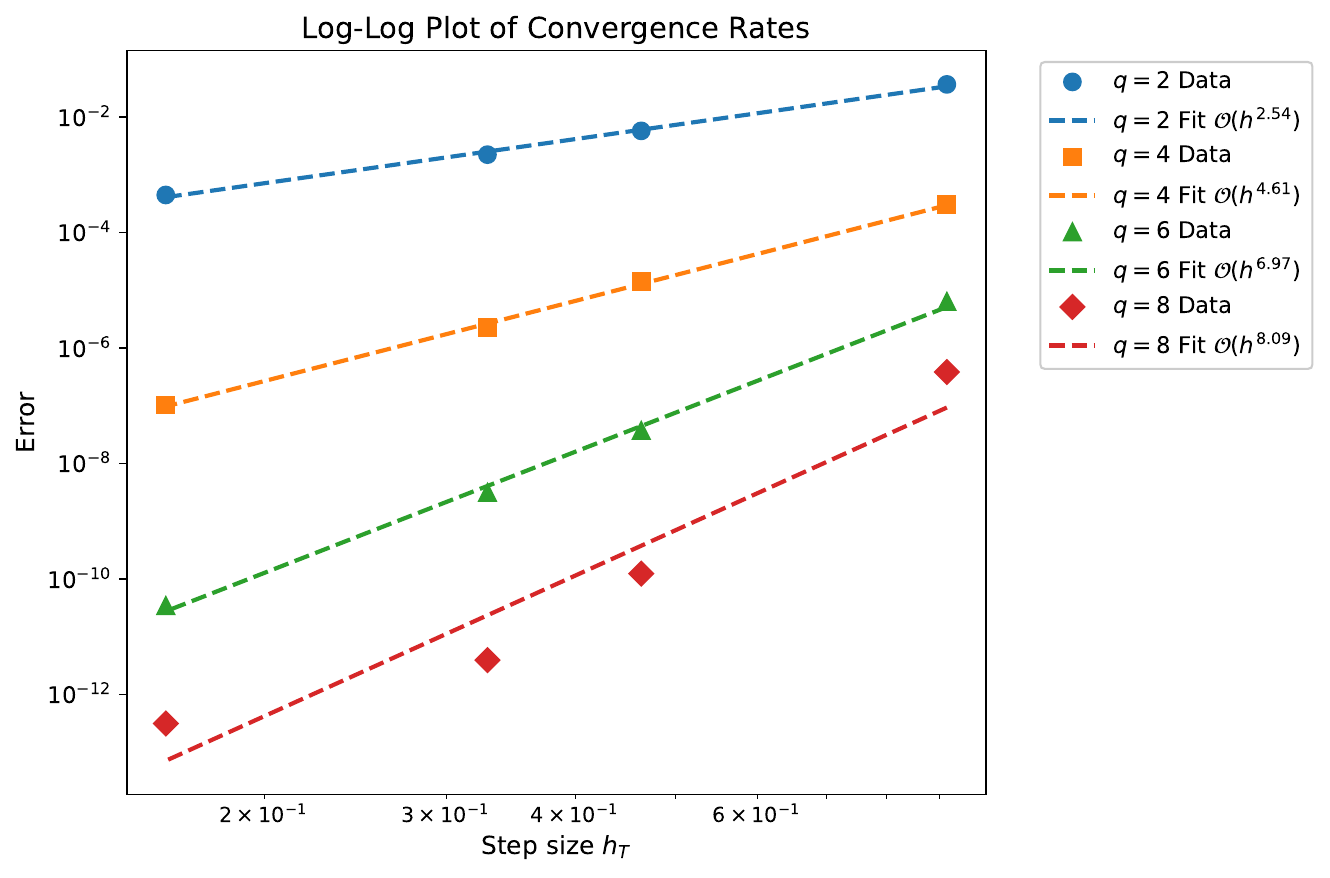} 
      \caption{Error in Example 1 (left) and 2 (right).} 
      \label{fig:test12}
 
\end{figure}

The errors are shown in Figure~\ref{fig:test12}. The observed convergence rate
increases with \(q\), which confirms the high-order behavior of the method.\\





 

\textbf{Example 2: Ellipsoid} Consider the surface $\Gamma \subset \mathbb{R}^3$ given by $x^2+4y^2+9z^2=1$. The integrand is chosen as $f=1$ so that the result is the surface area. The error with different $n$ and $q$ are shown in Fig. \ref{fig:test2.1}.\\

\begin{figure}[!htbp]
  \centering
 
      \includegraphics[width=0.45\linewidth]{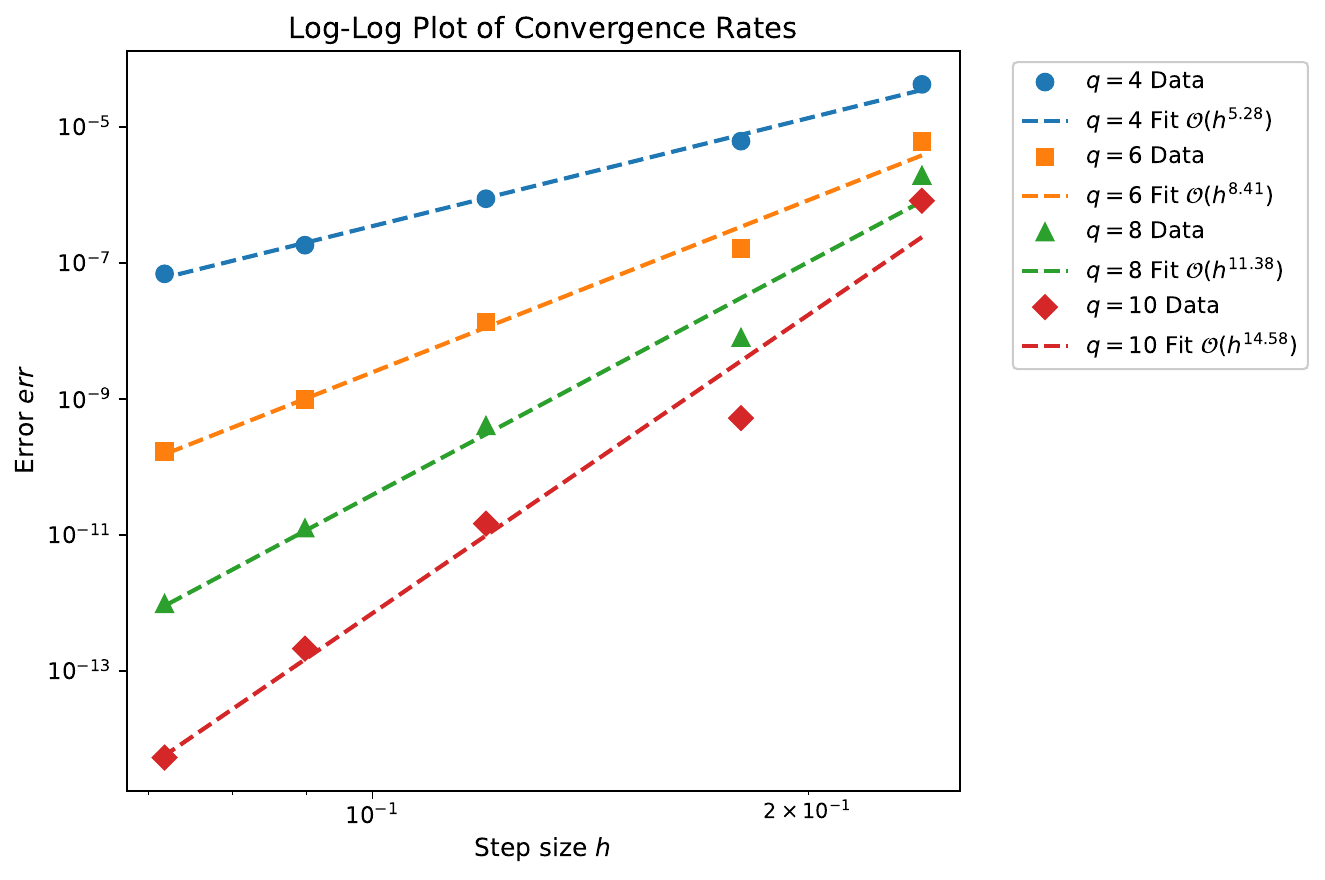} 
       \includegraphics[width=0.45\linewidth]{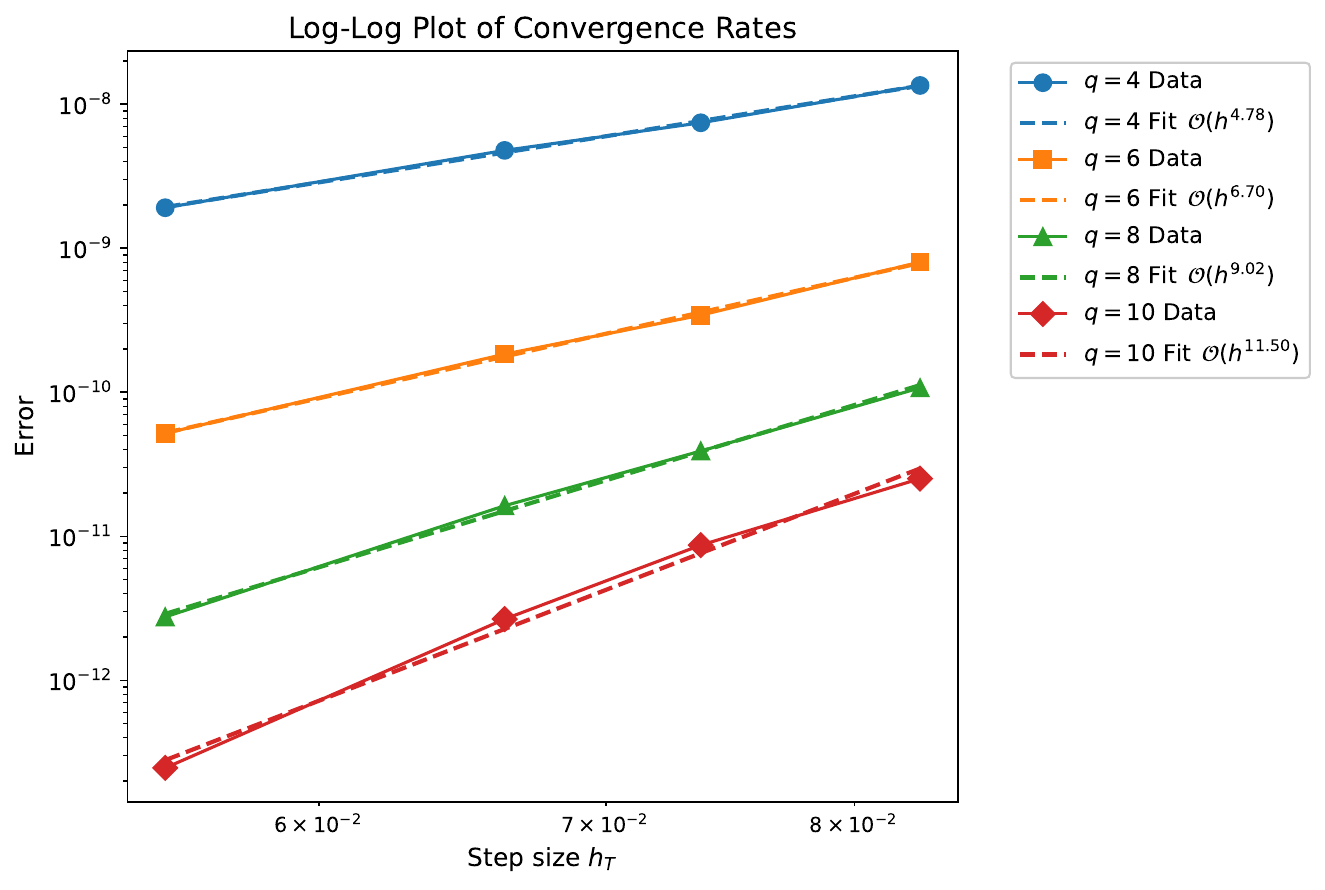} 
      \caption{Error in Example 2. Left: surface integral; Right: region integral.} 
      \label{fig:test2.1}
 
\end{figure}

For the region integral, we choose the integrand
$f(x,y,z)=1+x^2+2y^2+3z^2$ in the ellipsoid
$x^2+4y^2+9z^2\le1$. We compute the integral with $q=4,6,8,10$ and plot the error in Fig. \ref{fig:test2.1}.\\
 
 

\textbf{Test 3: Torus}
A torus $\Gamma = \{ F(x,y,z) = 0 \}$ is given by $F = (x^2 + y^2 + z^2 + R^2 - r^2)^2 - 4R^2(x^2 + y^2)$. We numerically compute the torus surface area $\int_{\Gamma} 1 \, \mathrm{d}S$ using this method and its analytical true value is $4\pi^2 R r$.We choose radius $R=0.8, r=0.35$ and $n = 12,\, 16,\, 24,\, 32,\, 40$,  with Gauss node degrees $q = 4,\, 6,\, 8,\, 10$. The results are shown in Fig. \ref{fig:test3.1}.
 \begin{figure}[!htbp]
   \centering
 
       \includegraphics[width=0.45\linewidth]{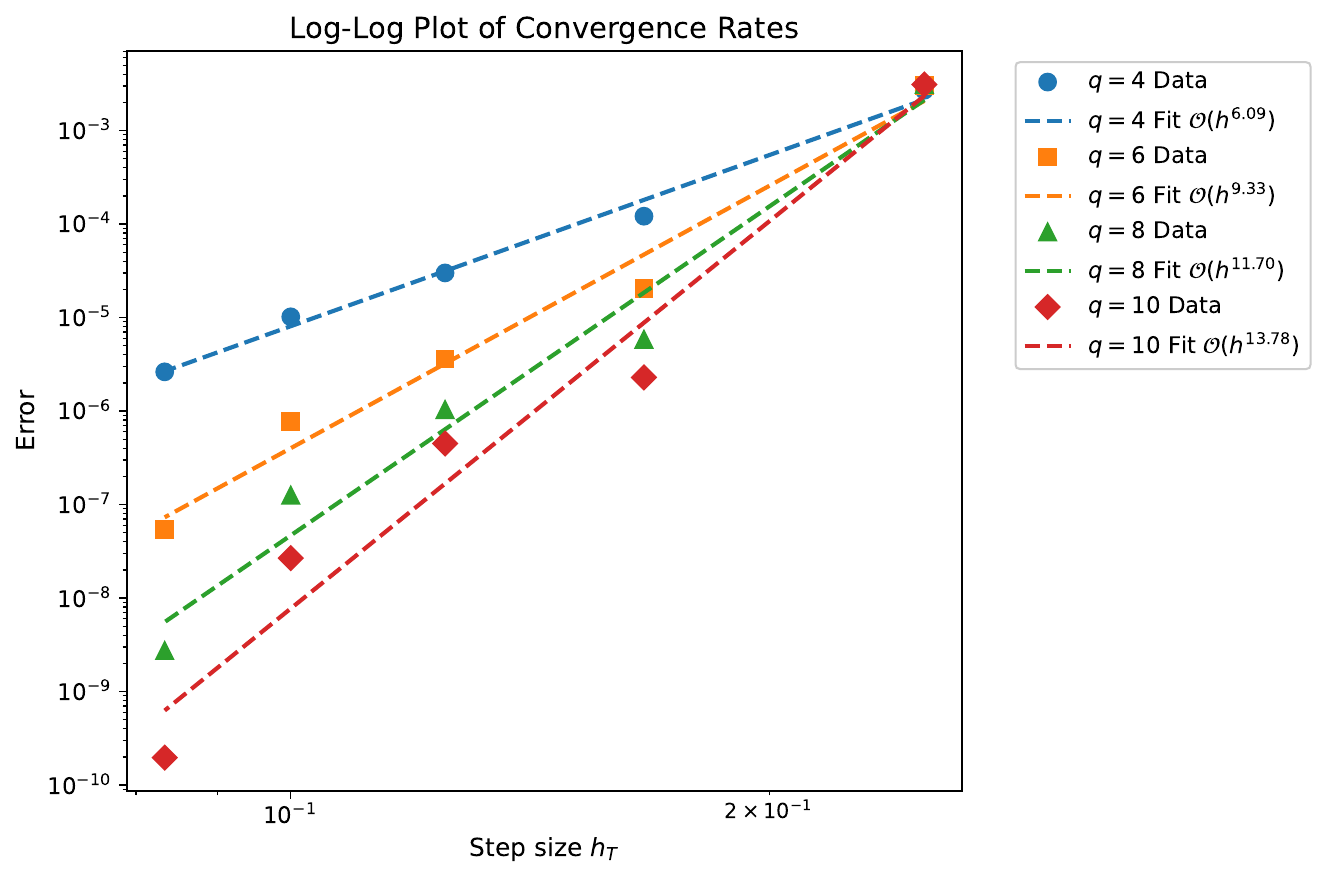} 
       \includegraphics[width=0.45\linewidth]{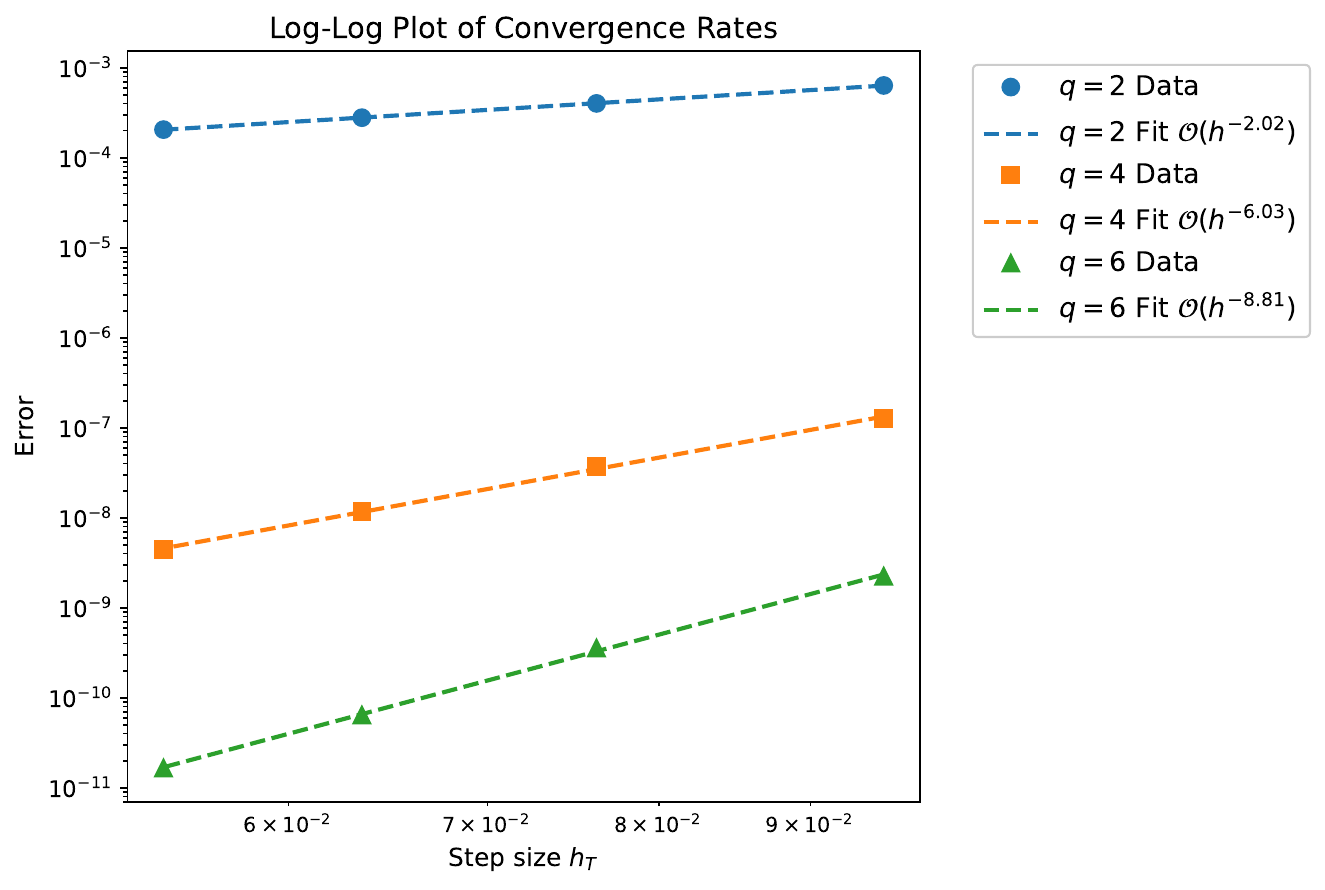} 
     \caption{Error in Example 3. Left: surface integral; Right: region integral} 
       \label{fig:test3.1}
 
\end{figure}

For the region case, we set the integrand to be $f(x,y,z) = 1 + x^2 + 2y^2 + 3z^2$ in the region $ (x^2 + y^2 + z^2 + R^2 - r^2)^2 - 4R^2(x^2 + y^2)\le 0$. We choose radius $R=1.5, r=0.5$ and Gauss node degrees $q=2,4,6$ to calculate the region integral, see Fig. \ref{fig:test3.1}.\\
\textbf{Example 5: A Enzensberger--Stern-type algebraic surface.}
To examine the performance of the proposed quadrature method on a geometrically
nontrivial closed surface, we consider a smoothed variant of the
Enzensberger--Stern algebraic surface.
  

Let
\[
    \Gamma_{a,b}
    =
    \left\{
    (x,y,z)\in\mathbb{R}^{3}:
    \phi_{a,b}(x,y,z)=0
    \right\},
\]
where
\[
\begin{aligned}
    \phi_{a,b}(x,y,z)
    =
    a\left(x^{2}y^{2}+y^{2}z^{2}+z^{2}x^{2}\right)-\left(1-x^{2}-y^{2}-z^{2}\right)^{3}-b.
\end{aligned}
\]
In the experiment, we take
$
    a=30,
    b=40.
$

We compute the surface integral
\[
    I
    =
    \int_{\Gamma_{a,b}} f(\boldsymbol{x})\,dS,\quad  f(\boldsymbol{x})
    =
    \boldsymbol{x}\cdot\boldsymbol{n}
    =
    \frac{
    \boldsymbol{x}\cdot\nabla\phi_{a,b}(\boldsymbol{x})
    }{
    \lVert\nabla\phi_{a,b}(\boldsymbol{x})\rVert
    }.
\]

For \(a=30\) and \(b=40\), the resulting reference value is
$I_{\mathrm{ref}}
    \approx
    53.6749414237373.$

\begin{figure}[!htbp]
  \centering
  
    \centering
    \includegraphics[width=0.5\linewidth]{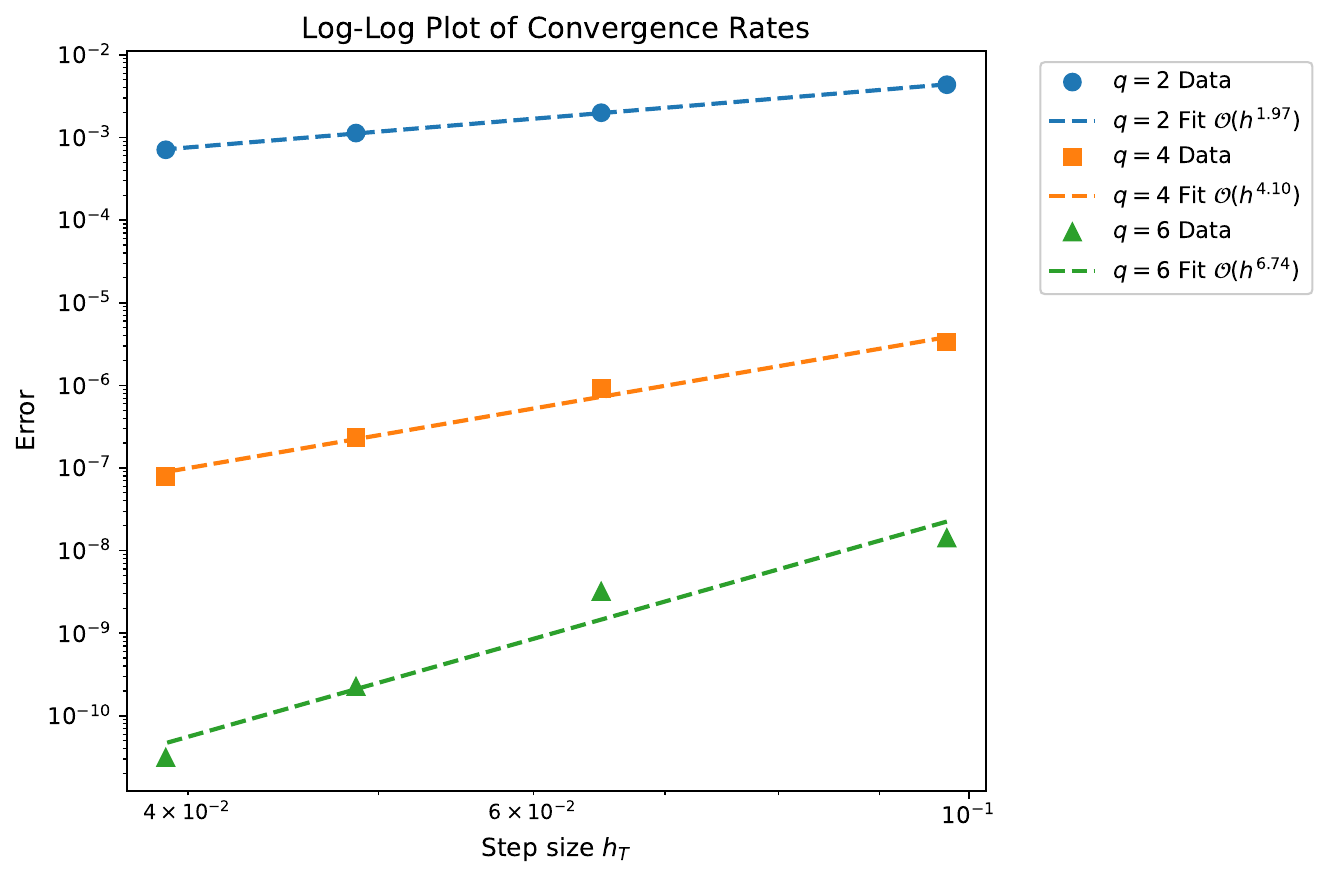} 
  
 \caption{Error in Example 5.\label{fig:Enzensburger} }
\end{figure}
The computational domain is
$[-2.25,2.25]^{3}$.
We use uniform background grids with
   $ =80,120,160,200$ and Gauss-Legendre quadrature
    $q=2,4,6$.
The errors are plotted in Fig.\ref{fig:Enzensburger}.

\section{Conclusion and remarks}
\label{sec:conclusion}

In this paper, we proposed a high-order quadrature method for curves, surfaces, and regions defined implicitly by a smooth level-set function. The method is based on a local change of variables on each cut element. After a mesh adjustment step, the intersection between the background mesh and the zero level set has a simple and nondegenerate structure. This allows the local geometry to be parametrized by solving one-dimensional nonlinear equations along prescribed rays. The resulting quadrature rules are built from standard Gauss--Legendre rules on fixed reference domains.


We established regularity estimates for the local parametrizations and used them to prove high-order convergence for both boundary and region integrals. In particular, for an $N_q$-point Gauss--Legendre rule, the global quadrature error is of order
$
O(h^{N_q})
$
for curve, surface, and region integrals under the stated smoothness and mesh consistency assumptions. Numerical experiments confirm the predicted convergence rates on representative implicit geometries in two and three dimensions.

Several directions remain for future work.  First, The ideas developed in this paper suggest a natural extension to surfaces represented by an unstructured point cloud. For point-cloud data, it would be valuable to combine the present quadrature and mesh-adjustment framework with robust local reconstruction methods and to establish error estimates that account for sampling density, noise, and normal-estimation errors. Second, the extension to adaptive and highly nonuniform meshes requires a more detailed local analysis of the mesh adjustment procedure. Finally, an efficient implementation for large-scale three-dimensional problems is an important direction for practical applications.

\section*{Acknowledgments}
This work was supported by National Natural Science Foundation of China (NSFC) 92370125.

\bibliographystyle{siamplain}

\bibliography{reference}

\end{document}